\newtheorem*{theorem*}{Theorem}
\newtheorem{theorem}{Theorem}[section]
\newtheorem{lemma}[theorem]{Lemma}
\theoremstyle{definition}
\newtheorem{definition}[theorem]{Definition}
\newtheorem{example}[theorem]{Example}
\theoremstyle{remark}
\numberwithin{equation}{section}
\newcommand{\firef}[1]{Figure~{\rm\ref{#1}}}
\newcommand{\thref}[1]{Theorem~{\rm\ref{#1}}}
\newcommand{\leref}[1]{Lemma~{\rm\ref{#1}}}
\newcommand{\deref}[1]{Definition~{\rm\ref{#1}}}
\newcommand{\seref}[1]{Section~{\rm\ref{#1}}}
\newcommand{\fig}[1]
{\raisebox{-0.5\height}%
{\includegraphics{#1}}}
\newcommand{\figscale}[2]
{\raisebox{-0.5\height}%
{\includegraphics[scale=#1]{#2}}}
\newcommand{\<}{\langle}
\renewcommand{\>}{\rangle}
\newcommand{\one}{\mathbf{1}}
\newcommand{\DD}{\mathcal{D}}      
\newcommand{\C}{\mathcal{C}}      
\newcommand{\M}{\mathcal{M}}      
\DeclareMathOperator{\Irr}{Irr}
\DeclareMathOperator{\Hom}{Hom}
\DeclareMathOperator{\Dim}{Dim}
\begin{document}
\title{Turaev-Viro invariants as an extended TQFT III}

\author{Benjamin Balsam}
   \address{Department of Mathematics, SUNY at Stony Brook, 
            Stony Brook, NY 11794, USA}
    \email{balsam@math.sunysb.edu}
    \urladdr{http://www.math.sunysb.edu/\textasciitilde balsam/}
    \thanks{This  work was partially suported by NSF grant DMS-0700589 }
    
\begin{abstract}
In the third paper in this series, we examine the Reshetikhin-Turaev and Turaev-Viro TQFTs at the level of surfaces. In particular, we show that for a closed surface $\Sigma$, $Z_{TV, \mathcal{C}}(\Sigma) \cong Z_{RT, Z(\C)}(\Sigma)$, thus extending the equality of 3-manifold invariants proved in \ocite{mine2} to an equivalence of TQFTs. We also describe how to compute Turaev-Viro state sums on 3-manifolds with embedded ribbon graphs.
\end{abstract}
\maketitle
\section*{Introduction}
In this paper we continue the work from \ocite{mine}, \ocite{mine2} in which we generalized the Turaev-Viro state-sum invariant to manifolds with corners. This gave an extended Topological Quantum Field Theory (TQFT). Using this extended theory, we showed that for a closed 3-manifold $\M$, $Z_{TV, \C}(\M) = Z_{RT, Z(\C)}(\M)$, where $\C$ is a spherical fusion category, $Z(\C)$ is its Drinfeld Center (which is modular) and $Z_{RT, Z(\C)}$ is the Reshetikhin-Turaev invariant based on $Z(\C)$. 
In this paper, we show that the TQFTs are isomorphic at the level of surfaces. Namely, if $\Sigma$ is a closed surface, we show that there is a nautural isomorphism $Z_{TV, \mathcal{C}}(\Sigma) \cong Z_{RT, Z(\C)}(\Sigma)$ of vector spaces. We also note that we actually get an equivalence of extended 3-2-1 theories if we impose mild restrictions on the allowed types of manifolds with corners.

It is easy to compute the dimensions of the above spaces: 
\begin{equation}
\Dim Z_{TV}(\Sigma_g) = \Dim Z_{RT}(\Sigma_g) = \DD^{2g-2}\displaystyle \sum_{i \in \Irr(\C)}d_{i}^{2-2g}
\end{equation}
where $\DD$ is the dimension of $\C$ and $d_i$ is the dimension of simple object $X_i$. The vector spaces are therefore isomorphic, but this is not enough. We need to exhibit a natural isomorphism between the spaces.

The same question occurs in general when attempting to define any 2D modular functor. For example, in RT theory, one decomposes the surface $\Sigma$ into a union of punctured spheres\footnote{Following \ocite{BK2}, we call this a \textit{cut sytem}.}, evaluates $Z_{RT}$ for each of them, and uses the gluing axiom to obtain $Z_{RT}(\Sigma)$. A priori, this appears to depend on the choice of decomposition of $\Sigma$. Refining earlier work by Moore and Seiberg, Bakalov and Kirillov \ocite{BK2} proposed a set of \textit{moves} (The "Lego-Teichm\"uller Game"). relating any two such decompositions. One can show that each of these moves  corresponds to a certain natural isomorphism of vector spaces and any two "paths" between two chosen decompositions yield the same map. The space $Z(\Sigma)$ is therefore well defined.

In this paper we apply the results described above to TV theory. In  \ocite{mine}, we constructed an isomorphism  
\begin{equation}
 Z_{TV}(\Sigma) \cong \Hom_{Z(\C)}(\one, Y_1 \otimes \dots \otimes Y_n)
 \end{equation}
where $\Sigma$ is an n-punctured sphere with boundary components labeled by $Y_1, \dots Y_n \in Irr(Z(\C))$. Notice that the space on the right of this equation is by definition  $Z_{RT, Z(\C)}(\Sigma; Y_1, \dots Y_n)$.

 It is important to note that RT is defined using "pairs-of-pants" decompositions  of surfaces, while TV is defined via cell decompositions. Since the latter is a local construction and the former is inherently nonlocal, comparing the two requires a natural way of passing between them. The solution is simple and is provided immediately by the surface parametrizations defined in [BK]. Using these, we can compute maps between TV state spaces that correspond to each of the moves between cut systems and check that such maps are compatible with the projector $H_{TV}(\Sigma) \longrightarrow Z_{TV}(\Sigma)$. Thus, we get a natural identification $Z_{RT}(\Sigma) \cong Z_{TV}(\Sigma)$.
 
 In both RT and TV theory, once we know the value of the TQFT on a punctured sphere, we can use the gluing axiom to define $Z(\Sigma)$ for any surface. Thus, we get a well-defined vector space, up to natural isomorphism that depends only on the topology of $\Sigma$. We do this in each case by defining "intermediate" vector spaces which do depend on some choices\footnote{The parametrization in RT and the cell decomposition in TV.} and demonstrating that we can identify all such spaces naturally. The key result in this paper is that we can pass between the theories in a natural way, so that $Z_{TV, \C}(\Sigma) \cong Z_{RT, Z(\C)}(\Sigma)$ independent of any choices. 

The paper is organized as follows. First, we briefly review the theory of parametrized surfaces from \ocite{BK2}.  Next, we examine the effect of passing between parametrizations on the associated TV state spaces. In particular, we show that each of the moves yields a natural map between state spaces, which under projection gives the same identification between vector spaces as that in RT. This establishes an equivalence of theories at the level of surfaces. Finally, we consider extended 3-manifolds with boundary and show that both theories give the same answer. Along the way, we explain how Turaev-Viro theory works for 3-manifolds with embedded ribbon graphs. The appendix contains some of the larger diagrams referenced in the paper. 

This paper completes the program outlined in \ocite{mine} and continued in \ocite{mine2}. The reader is strongly encouraged to read these papers before this one, as they contain much prerequisite material.
\subsection*{Acknowledgments}
The author would like to thank Sasha Kirillov for his help in writing this paper.
\section{Surface Decompositions}\label{s:lego}
In this section we briefly review the notion of a parametrized surfaces. For a complete exposition, see \ocite{BK2}.  Informally, a parametrization is a way of writing a surface $\Sigma$ as the union on punctured spheres, together with a fixed identification of each punctured sphere with a \textit{standard} sphere. 
The \textit{standard sphere} with n punctures is defined formally as
\begin{equation}
 S_{0, n} = \mathbb{CP}^1 \backslash \{D_1, \dots, D_n\};  D_j = \{z | |z-z_j| < \epsilon\}, z_1 < \dots < z_n 
\end{equation}
where $\epsilon$ is sufficiently small so the boundary circles do not intersect. We also fix a point $p_i \in \partial D_i$. Note that we have fixed an ordering of the boundary circles, so we can refer to the set of boundary components by \{\textbf{1}, \dots, \textbf{n}\}. 
\begin{definition}
An \textit{extended surface} is a compacted oriented surface $\Sigma$ , possibly with boundary, together with a fixed point $p_\alpha$ on each boundary component $(\partial \Sigma)_\alpha$.
\end{definition}

Note that there are several other equivalent ways of defining an extended surface (See \ocite{BK}).

\begin{definition} A \textit{colored} extended surface is an extended surface together which a choice of label $Z_{\alpha} \in Z(\mathcal{C})$ for each marked point $p_{\alpha}$.
\end{definition}
We now give the main definition of this section. Let $\Sigma$ be a colored extended surface.
\begin{definition}
A \textit{parametrization} of $\Sigma$ consists of 
\begin{enumerate}
\item A finite set $C$ of non-intersecting simple closed curves on $\Sigma$ such that $\Sigma \backslash C$ is of genus zero. We call $C$ the set of \textit{cuts} and fix a point on each cut. 
\item For each component $\Sigma_a$ of $\Sigma \backslash C$, a homeomorphism $\psi: \Sigma_a \to S_{0, n_a}$
\end{enumerate}
\end{definition}
Two parametrizations are considered equivalent if they are isotopic \footnote{Both the set of cuts and the homemorphisms of boundary components are considered up to isotopy}. 
There is a nice graphical way of describing parametrized surfaces. Namely, take the standard sphere with the graph as shown in \firef{f:paramsphere}.
\begin{figure}[ht]
\figscale{.5}{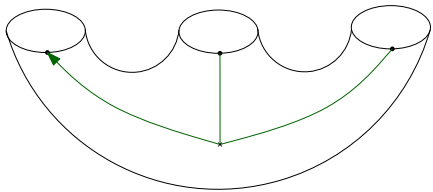}
\caption{The graph on $S_{0,3}$} \label{f:paramsphere}
\end{figure}
This graph connects a single internal vertex to each of the points $p_{\alpha}$ fixed on the boundary and labels the edge connected to circle \textbf{1} by an arrow. 

To depict a parametrization of any surface $\Sigma$, we draw the cuts on $\Sigma$. Then for each connected component $\Sigma_{\alpha}$, we pull back the graph on the standard sphere by $\psi_{\alpha}$ to obtain a graph $M_{\alpha}$ on $\Sigma_{\alpha}$. Clearly, such data are equivalent (up to isotopy) to specifying a parametrization and henceforth we will refer to a parametrization as a pair $(C, M)$ where $C$ is a set of cuts on $\Sigma$ and $M = \cup_{\alpha}M_{\alpha}$. 
\begin{figure}[ht]
\figscale{.5}{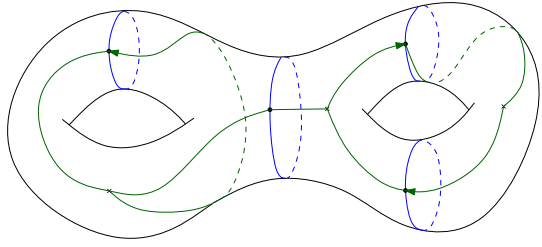}
\caption{A parametrization of a genus two surface $\Sigma = S_{0,3} \sqcup S_{0,3} \sqcup S_{0,2}$. The blue lines are cuts and the green lines are graphs $M_{\alpha}$}
\end{figure}
When possible, we will often draw the graphs $M_{\alpha}$ in the plane, ignoring the sufaces into which they are embedded. The reader should have no difficulty passing between such a graph and the surface it represents.
\begin{definition}
Let $\Sigma$ be a parametrized sphere with n boundary components colored by $Z_1, \dots, Z_n \in \Irr(Z(\mathcal{C}))$. We define the Reshetikhin-Turaev invariant of $\Sigma$ to be
\begin{equation}
Z_{RT, Z(\mathcal{C})}(\Sigma; Z_{1}, \dots, Z_{n}) = \Hom_{Z(\mathcal{C})}(\one, Z_{1} \otimes \dots \otimes Z_{n})
\end{equation}
\end{definition}
More generally, we can defined the Reshetikhin-Turaev invariant for any colored, parametrized surface as follows. $\Sigma \backslash C$ is a union of genus zero surfaces with boundary, each equipped with parametrization inherited from $\Sigma$. Let $\Sigma_{\alpha}, \Sigma_{\beta}$ be two such components separated by a cut $c$. Then $c$ corresponds to two boundary circles, one on $\Sigma_{\alpha}$ and the other on $\Sigma_{\beta}$. We may color these components by assigning $Z \in \Irr(Z(\mathcal{C}))$ to one component and $Z^*$ to the other.
\begin{definition}
Let $(\Sigma, P)$ be a colored parametrized surface.
\begin{equation}
Z_{RT, Z(\C)}(\Sigma, P) = \displaystyle \bigoplus_{Y_{1_{\alpha}}, \dots Y_{n_{\alpha}}} \bigotimes_{\alpha} Z_{RT, Z(\C)}(\Sigma_{\alpha}; Y_{1_{\alpha}}, \dots Y_{n_{\alpha}})
\end{equation}
where the product is over all connected components of $\Sigma \backslash C$, and we color all \textit{newly created}  boundary components as described above, summing over all possible colorings.
\end{definition}

We will typically denote a simple object $Y_i \in Z(\C)$ by its index $i$. In all that follows, $i^*$ represents the dual object $Y_i^*$, which is also simple. To simplify formulas, many authors attempt to pick a function $f: Irr(\C) \to Irr(\C)$ so that $Y_{i}^* = Y_{f(i)}$, but one should avoid doing this at all costs since it is often impossible to do so in a consistent manner (See \ocite{BK}, Remark 2.4.2).
\begin{example}
Let $\Sigma$ be the torus with one puncture and parametrization as shown on the left hand side of \firef{f:smove} and boundary disk labeled by $Y$. Then $Z_{RT, Z(\C)}(\Sigma) = \displaystyle \bigoplus_{i \in Irr(Z(\C))}Hom_{Z(\C)}(\one, Y \otimes i \otimes i^*)$.
\end{example}

Now we describe a set of \textit{moves} between parametrizations of a surface. As we'll see below, we can relate any two decompositions by a finite composition of these moves:

\begin{enumerate}
\item The Z-move cyclically permutes the boundary components.
\item The B-move \textit{braids} one boundary component about an adjacent one.
\item The F-move removes a cut. If a cut separates $S_{0,n}$ and $S_{0,m}$, deleting the cut gives a component homeomorphic to $S_{0,m+n-2}$ together with a graph inherited from the original components. Notice that we connect circle \textbf{1} from the one sphere to circle \textbf{m} of the other, thus resulting in a graph which inherits a natural ordering of boundary circles.
\item The S-move interchanges meridians and longitudes of the punctured torus.
\end{enumerate}
\begin{figure}
\figscale{.8}{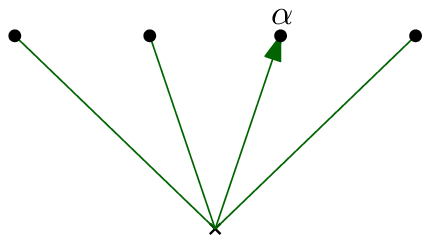} \hspace{1cm} $\stackrel{Z}{\longrightarrow}$ \hspace{1cm} \figscale{.8}{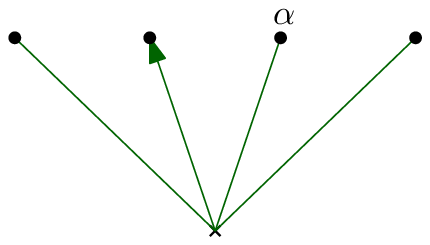}
\caption{Z-move}
\end{figure}
\begin{figure}
\figscale{.4}{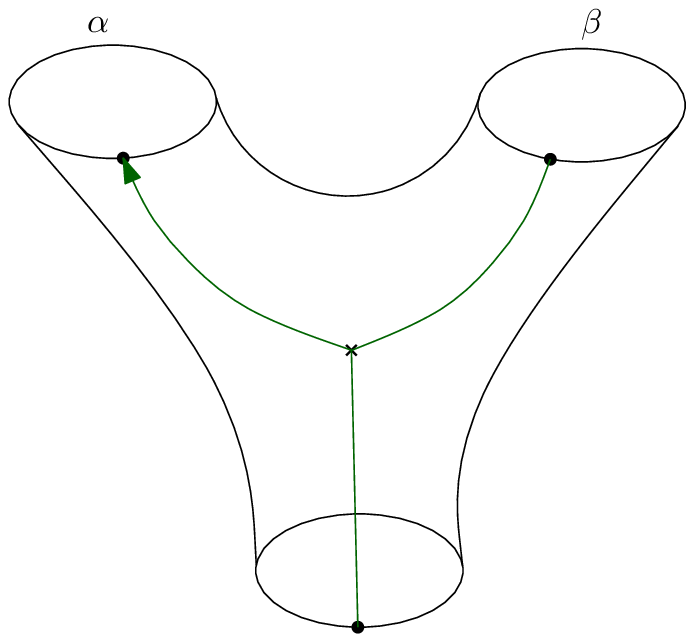} \hspace{1cm} $\stackrel{B}{\longrightarrow}$ \hspace{1cm} \figscale{.4}{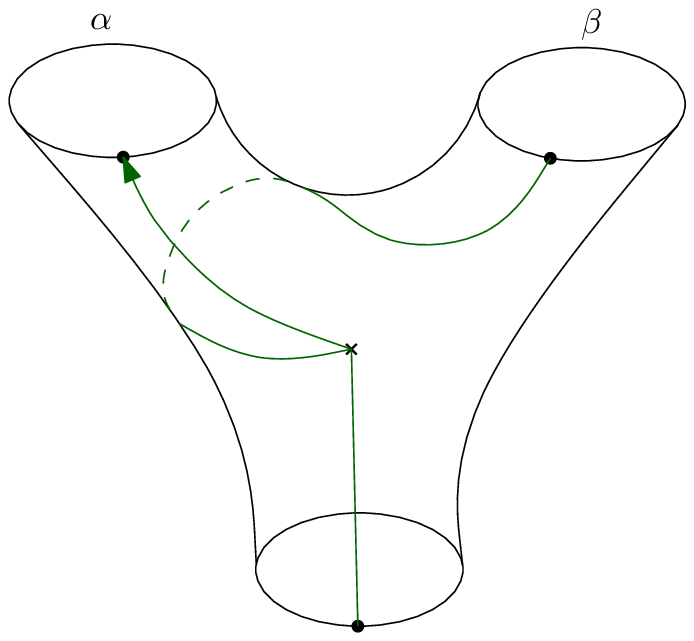}
\caption{B-move}
\end{figure}
\begin{figure}
\figscale{.5}{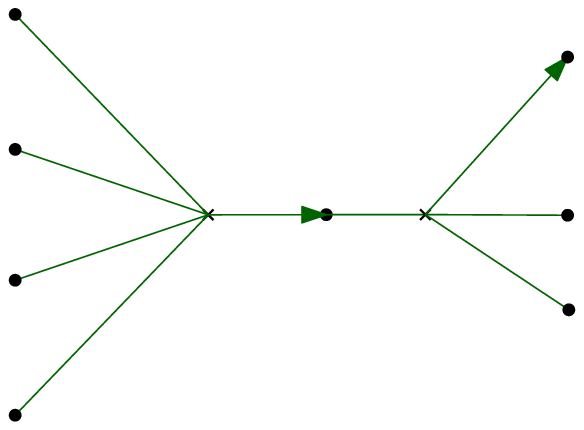} \hspace{1cm} $\stackrel{F}{\longrightarrow}$ \hspace{1cm} \figscale{.5}{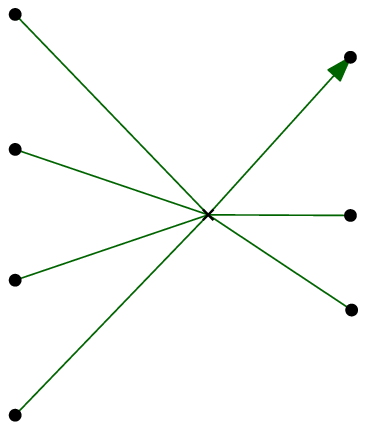}
\caption{F-move}
\end{figure}
\begin{figure}\label{f:smove}
\figscale{.4}{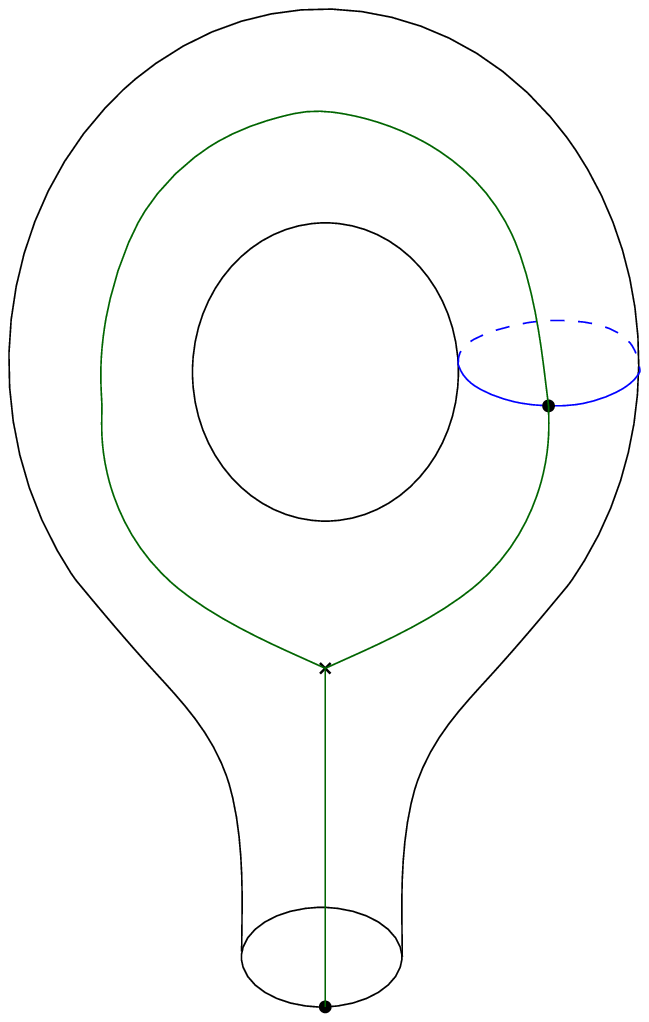} \hspace{1cm} $\stackrel{S}{\longrightarrow}$ \hspace{1cm} \figscale{.4}{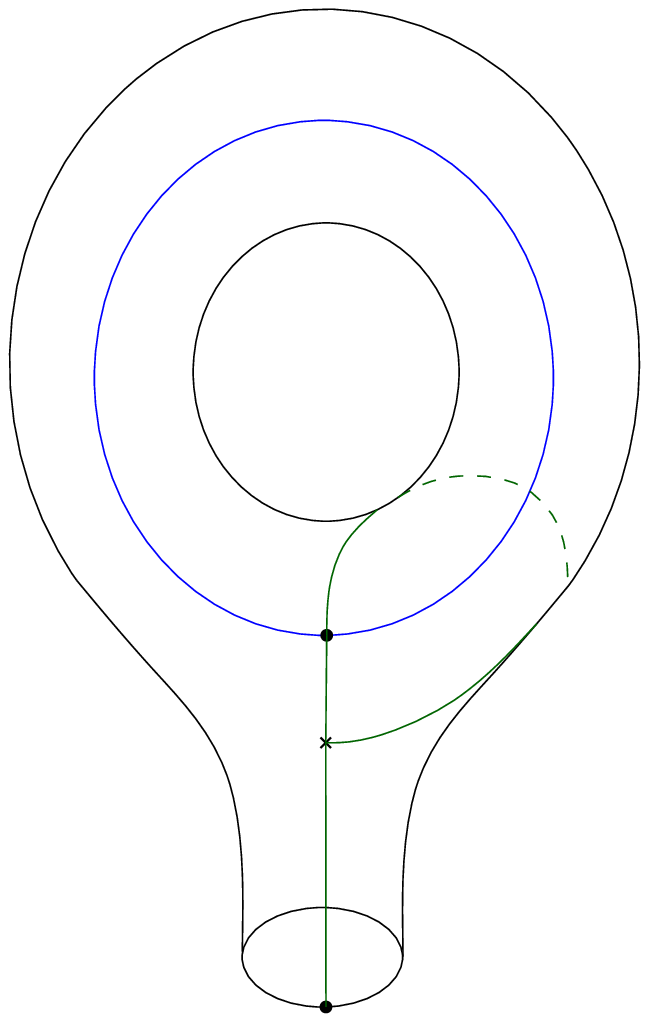}
\caption{S-move}
\end{figure}

\begin{theorem}
Let $A = (\Sigma, C, M)$ and $A' = (\Sigma, C', M')$ be two parametrizations of a surface. Then $A$ and $A'$ are related by a finite sequence of $Z,B,F$ and $S$ moves described above.
\end{theorem}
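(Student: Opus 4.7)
The plan is to decouple the two pieces of data defining a parametrization, the cut system $C$ and the graph $M$, and handle them in sequence. First I would fix $C = C'$ and show that any two graph markings $M, M'$ compatible with $C$ are related by $Z$- and $B$-moves alone. Then I would show that any two cut systems $C, C'$ can be related by a sequence of $F$-moves, inverse $F$-moves, and, on torus summands, $S$-moves, reducing to the previous case at the end in order to fix up the graphs on each stage.

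For the first step, on each component $\Sigma_a$ of $\Sigma \setminus C$ the graphs $M_a, M_a'$ correspond to two homeomorphisms $\psi_a, \psi_a' \colon \Sigma_a \to S_{0,n_a}$ preserving the marked boundary points $p_\alpha$. The composition $\psi_a' \circ \psi_a^{-1}$ is an element of the mapping class group of the standard punctured sphere that acts on the labelling and parametrization of the boundary circles, and this group is generated by cyclic permutations of those circles together with half-twists exchanging two adjacent circles, realized exactly by $Z$- and $B$-moves. Composing the appropriate moves on each component transforms $M$ into $M'$. For the second step, I would induct on the geometric intersection number of $C$ and $C'$, after isotoping to minimize intersections. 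When $C$ and $C'$ are disjoint, one builds the refinement $C \cup C'$ by inverse $F$-moves and then trims back to $C'$ by $F$-moves, adjusting the graphs along the way using the first step. When a curve $c \in C$ essentially intersects a curve $c' \in C'$, a regular neighborhood of $c \cup c'$ in $\Sigma$ is either a four-holed sphere, where $c$ and $c'$ can be exchanged by an inverse $F$ followed by an $F$ on the dual cut, or a one-holed torus, where $c$ and $c'$ are dual non-separating curves and must be exchanged by an $S$-move.

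The main obstacle is precisely this last case. On a one-holed torus no combination of $Z$-, $B$-, and $F$-moves can interchange the meridian and longitude, since both are non-separating while $F$-moves act only on separating cuts; the $S$-move is engineered exactly to realize the corresponding element of the mapping class group $SL(2,\Z)$ of the torus. Once the four local models (two disjoint cuts, two cuts meeting in a four-holed sphere, two cuts meeting in a one-holed torus, and graph changes on a fixed component) are established, the global statement follows by a Hatcher--Thurston style connectivity argument for the cut-system complex, transposed to the present setting where the graph data $M$ is carried along using the first step.
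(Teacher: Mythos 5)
First, a point of comparison: the paper does not actually prove this theorem. It is quoted from \ocite{BK2} (with the underlying topology going back to Hatcher--Thurston \ocite{HT}), so there is no in-paper argument to measure your proposal against; the relevant benchmark is the proof in \ocite{BK2}. Your outline is in the same spirit as that proof --- decouple the graph data from the cut system, handle markings over a fixed cut system with $Z$ and $B$, and handle changes of cut system by a connectivity argument with $F$ and $S$ as elementary transitions --- but as written it has genuine gaps at exactly the hard points.

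The main gap is in the cut-system step. Your induction on geometric intersection number only treats the cases where a regular neighborhood of $c \cup c'$ is a four-holed sphere or a one-holed torus, which covers $|c \cap c'| \le 2$; for curves meeting in many points there is no such local model, and reducing an arbitrary pair of cut systems to the low-intersection local moves is precisely the content of the Hatcher--Thurston connectivity theorem, which you invoke but do not supply. Deferring it leaves the proof incomplete, and one cannot simply quote \ocite{HT} verbatim either: their cut systems are collections of $g$ nonseparating curves cutting $\Sigma$ to a single sphere, whereas the cuts here may be separating and cut $\Sigma$ into many genus-zero pieces, so a translation between the two notions is required (this is part of what \ocite{BK2} actually does). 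A second, smaller gap is in the fixed-cut-system step: the mapping class group of $S_{0,n}$ rel the marked boundary points is not generated merely by cyclic permutations and half-twists of adjacent circles; it contains Dehn twists about boundary circles and about curves enclosing several holes. These are in fact expressible through $B$ and $Z$ (for instance $B^2$ yields the twist about a curve enclosing two adjacent holes, up to boundary twists), but that must be proved rather than asserted. Finally, your stated reason that $S$ is indispensable (``$F$-moves act only on separating cuts'') is not quite right --- the $F$-move removes any cut adjacent to two distinct components of $\Sigma \setminus C$, and such a cut need not be separating in $\Sigma$ --- although the conclusion that $Z$, $B$, $F$ alone cannot exchange meridian and longitude on the one-holed torus is correct.
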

This result has its origins in conformal field theory. It was conjectured by Moore and Seiberg and rigorously proved in \ocite{BK2}. It is a generalization of a result by Hatcher and Thurston (\ocite{HT}), which describes moves between surfaces decomposed into spheres, cylinders and pairs-of-pants, but doesn't take into account the full data of a parametrization. The theorem in \ocite{BK2} does a lot more in fact: it provides a complete set of relations between the above moves,  but we will not need this part explicitely.

These moves are important for defining a 2-dimensional (extended) modular functor $\mathcal{F}$. Given the vector space associated to the punctured sphere, one should be able to use the gluing axiom to describe $\mathcal{F}(\Sigma)$ for a surface of any genus. Different parametrizations should give naturally isomorphic vector spaces; one can check that this is so by verifying that it is true for each of the \textit{simple} moves between parametrizations.
If $P, P'$ are two parametrizations of a surface $\Sigma$ related by a single $Z,B, F$ or $S$ move, we can explicitely describe the correspondence between associated vector spaces in RT theory:
\begin{lemma} \label{l:isomorphisms}
Let $P, P'$ be two parametrizations of a surface $\Sigma$ and let $X: (\Sigma, P) \longrightarrow (\Sigma, P')$ be any composition of $Z, B, F$ and $S$ moves connecting $P$ and $P'$. Then $X$ induces an isomorphism $X_{*}: Z_{RT, Z(\C)}(\Sigma, P) \stackrel{\cong}{\longrightarrow} Z_{RT, Z(\C)}(\Sigma, P')$. This isomorphism is independent of the choice of $X$.
In terms of the generators,
\begin{enumerate}
\item The Z-move corresponds to the rotation isomorphism: \\ $\<Y_1, \dots, Y_n\> \to \<Y_n, Y_1, \dots, Y_{n-1}\>$
\begin{center}
\figscale{.6}{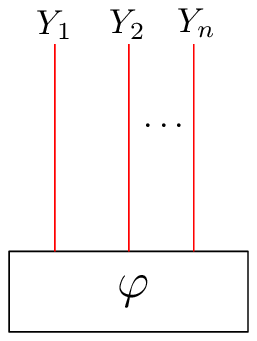} \hspace{.5cm} $\stackrel{Z_{*}}{\longrightarrow}$ \hspace{.5cm} \figscale{.6}{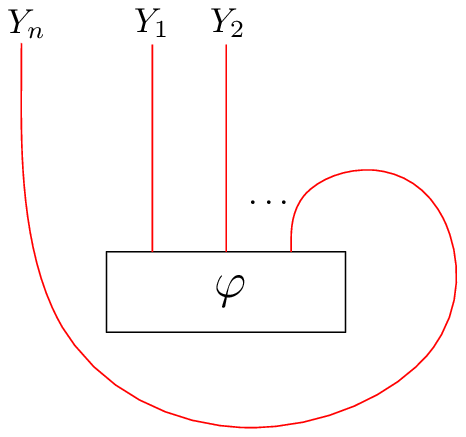}
\end{center}
\item The F-move gives the composition isomorphism. That this is an isomorphism follows directly from semisimplicity.
\begin{center}
$\displaystyle \sum_{i \in Irr(Z(\C))}$ \figscale{.6}{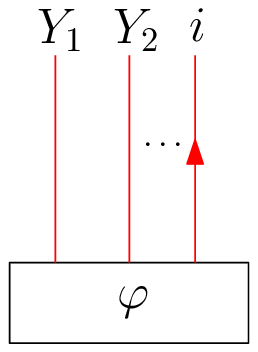} \figscale{.6}{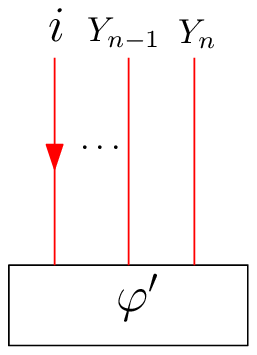} $\stackrel{F_{*}}{\longrightarrow} \displaystyle \sum_{i}$ \figscale{.6}{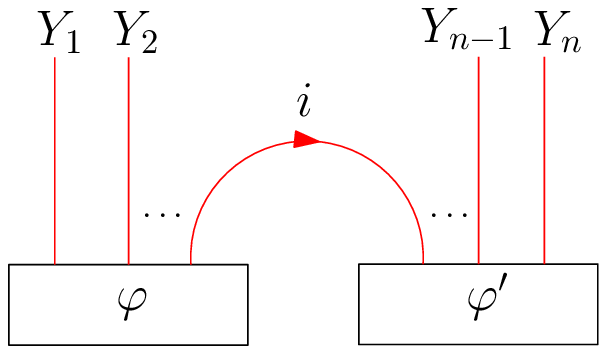}
\end{center}
\item The B-move gives the braiding isomorphism
\begin{center}
\figscale{.6}{zmovegraph1.eps} \hspace{.5cm} $\stackrel{B_{*}}{\longrightarrow}$ \hspace{.5cm} \figscale{.6}{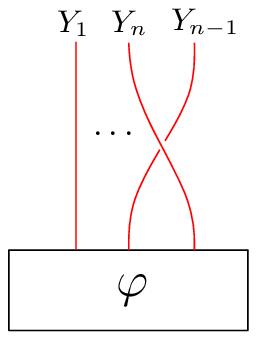}
\end{center}
\item The S-move gives multiplication by the S-matrix
\begin{center}
$\displaystyle \sum_{B} \hspace{.1cm}$\figscale{.6}{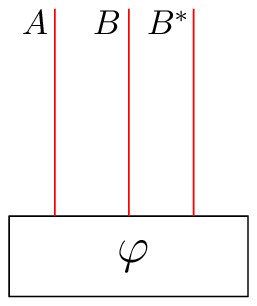} \hspace{.5cm} $\stackrel{S_{*}}{\longrightarrow}$ \hspace{.5cm} $\frac{1}{\mathcal{D}^2}\displaystyle \sum_{B, Y} \figscale{.6}{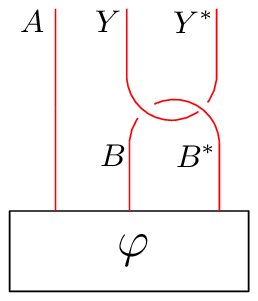}$
\end{center}
\end{enumerate}
\end{lemma}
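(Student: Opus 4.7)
The plan is to construct each of the four generating isomorphisms directly from the modular tensor structure on $Z(\C)$, verify individually that each is an isomorphism, and then reduce the path-independence statement to the Moore--Seiberg presentation of the parametrization groupoid established in \ocite{BK2}.

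\textbf{Constructing the maps.} The Z-move isomorphism
$\Hom_{Z(\C)}(\one, Y_1 \otimes \cdots \otimes Y_n) \to \Hom_{Z(\C)}(\one, Y_n \otimes Y_1 \otimes \cdots \otimes Y_{n-1})$
is the cyclic rotation supplied by the pivotal (spherical) structure on $Z(\C)$, which is inherited from sphericality of $\C$. The B-move is the braiding $c_{Y_a, Y_{a+1}}$ applied to two adjacent tensor factors; this is an isomorphism because $Z(\C)$ is braided. The F-move is the composition pairing: semisimplicity of $Z(\C)$ yields a natural decomposition of $\id_{X \otimes Y}$ in terms of bases of $\Hom(i, X \otimes Y)$ and their duals, giving an isomorphism
$\bigoplus_{i \in \Irr(Z(\C))} \Hom(\one, X \otimes i) \otimes \Hom(\one, i^* \otimes Y) \cong \Hom(\one, X \otimes Y)$.
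Finally, the S-move is multiplication by the normalized S-matrix of $Z(\C)$; its invertibility is precisely the modularity of $Z(\C)$ (which holds by M\"uger's theorem). Each of these maps is manifestly natural in the labels.

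\textbf{Well-definedness.} By \ocite{BK2} the groupoid whose objects are parametrizations of a fixed surface and whose morphisms are homotopy classes of $Z, B, F, S$ sequences is presented by a finite, explicit list of relations (the Moore--Seiberg relations). To conclude that $X_*$ depends only on the endpoints of $X$ it suffices to verify that each such relation holds when the four generators are interpreted via the isomorphisms constructed above. Each check translates into a standard identity inside a modular tensor category: the pentagon for F (associativity), the two hexagons (compatibility of F and B), the rotation identities (sphericality), and the genus-one relations that mix F, B, and S on the punctured torus, such as $S^2 = C$ and $(ST)^3 = \lambda S^2$. This dictionary is precisely what is verified in \ocite{BK2}, and we invoke it as a black box.

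The main obstacle is not constructing the maps but setting up the translation between Moore--Seiberg relations and MTC identities. Conceptually the heart of the lemma is the fact that the Moore--Seiberg relations are satisfied in every modular tensor category and that $Z(\C)$ is modular; once this is accepted, every step reduces to routine categorical bookkeeping, which we shall perform only as needed in the subsequent comparison with TV theory.
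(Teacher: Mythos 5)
Your proposal is correct and follows exactly the route the paper itself takes: the paper states this lemma without proof, relying on the Lego--Teichm\"uller presentation of \ocite{BK2} and the standard construction of a modular functor from the modular category $Z(\C)$, which is precisely the argument you reconstruct (generators from the pivotal structure, braiding, semisimplicity, and the invertible S-matrix; path-independence from the Moore--Seiberg relations). No substantive difference to report.
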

\section{Parametrized surfaces and cell decompositions}{\label{s:mainsection}
In this section, we state and prove the main result of the paper: TV and RT theories assign the same vector space (up to natural isomorphism) surface $\Sigma$, which may have boundary. If $\partial \Sigma \neq \emptyset$, we fix a coloring of each boundary component $\partial \Sigma_i$ by $Z_i \in \Irr(Z(\mathcal{C}))$.

Given two cell decompositions $\Delta, \Delta'$ of a surface $\Sigma$, there is a natural map
\begin{equation} \label{e:natmap}
\Psi_{\Delta', \Delta}: H(\Sigma, \Delta) \longrightarrow H(\Sigma, \Delta')
\end{equation}
obtained by computing a state sum on the cylinder $\Sigma \times I$, with a decomposition chosen to agree with $\Delta$ on $\Sigma \times 0$ and $\Delta'$ on $\Sigma \times 1$. Note, that this map does not depend on the choice of the internal decomposition. We will refer to this map as the \textit{cylinder} map. \\
The cylinder map is not an isomorphism in general since the dimension of $H_{TV, \C}(\Sigma, \Delta)$ depends on the number of edges of $\Delta$, but it is almost an isomorphism. More precisely, define the space $Z_{TV, \C}(\Sigma, \Delta) = Im(\Psi_{\Delta, \Delta})$. Then 
\begin{equation} \label{e:natis}
\Psi_{\Delta, \Delta'}: Z_{TV, \C}(\Sigma, \Delta) \longrightarrow Z_{TV, \C}(\Sigma, \Delta')
\end{equation}
is a natural isomorphism. We can refer to this space as $Z_{TV, \C}(\Sigma)$, since up to natural isomorphism it doesn't depend on the cell decomposition.

Given a parametrized surface $\Sigma$, there is a natural way to obtain a cell decomposition of $\Sigma$ . We have a fixed collection of closed curves dividing $\Sigma$ into the union of punctured spheres. These cuts become 1-cells in the cell decomposition. Further, for each punctured sphere thus obtained, we have a graph from our parametrization terminating at fixed points on the boundary circles. Each edge of this graph becomes a 1-cell and the points at which the 1-cells terminate become vertices. It is easy to see that these choices define a cell decomposition in the sense of \ocite{mine} \footnote{See, in particular, Figure 27}. We call the cell decomposition obtained in this way, the \textit{associated} cell decomposition to parametrization $P$.

Recall that for a punctured sphere with standard cell decomposition (\firef{f:paramsphere}), we have a projection $H_{TV, \C}(S^2) \stackrel{\pi}{\rightarrow} Z_{TV, \C}(S^2) \cong Z_{RT, Z(\C)}(S^2)$. The associated inclusion map $i$ can be described graphically:
\begin{figure}[ht] 
$\varphi$ $\hspace{.2cm} \stackrel{i}{\longrightarrow}$ \hspace{.2cm} $\displaystyle \bigoplus_{x_1, \dots , x_n} \prod_{j=1}^{n}\sqrt{d_j} \hspace{.2cm}$\figscale{1.25}{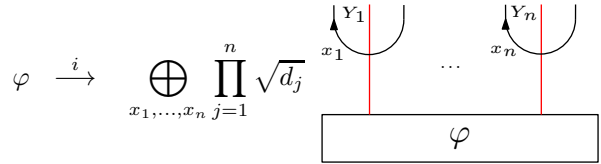}
\caption{$i: Z_{RT, Z(\C)}(S^2) \hookrightarrow H_{TV, \C}(S^2)$} \label{f:inclusion}
\end{figure}
The normalization factors are chosen to agree with that in  \ocite{mine}, so that $\pi \circ i = Id$.

We have two parallel notions in TV and RT theory. On the RT side, we have surface parametrizations and passing between any two parametrizations gives an isomorphism as described earlier in \leref{l:isomorphisms}. On the TV, side, we have cell decompositions; passing between any two cell decompositions gives a natural isomorphism obtained from a cylinder as described earlier. The following theorem, which implies the main result in this paper, shows that these two notions are the same, up to projection. 
\begin{theorem} \label{t:main}
Let $P, P'$ be two parametrizations of a surface $\Sigma$ with associated cell decompositions $\Delta, \Delta'$ respectively. Then the diagram in \firef{f:diagram} commutes.

\begin{figure} 
\figscale{.8}{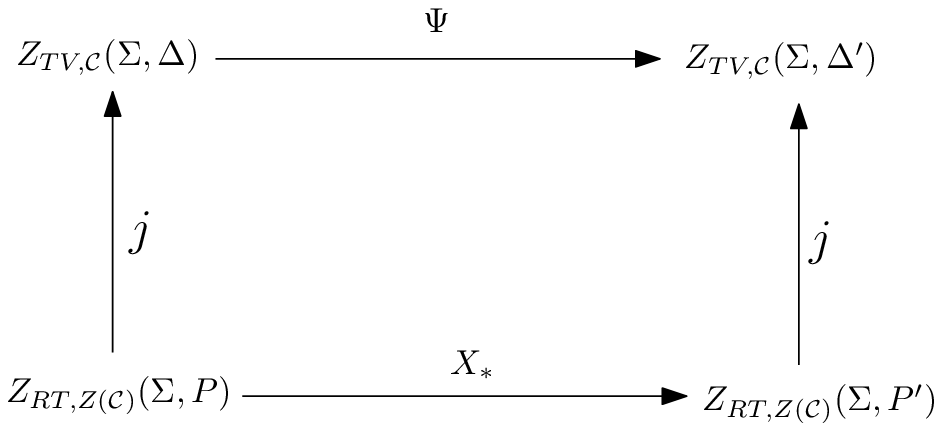}
\caption{} \label{f:diagram}
\end{figure}
Here, $X_{*}$ is the map described in \leref{l:isomorphisms}, $j$ is the map described in \firef{f:inclusion} followed by projection to $Z_{TV,\C}(\Sigma)$and $\Psi$ is the isomorphism described in \eqref{e:natis}.
\end{theorem}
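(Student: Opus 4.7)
The plan is to reduce the statement to the case where $P$ and $P'$ differ by a single generating move (Z, B, F, or S), since by the Moore--Seiberg theorem recalled earlier any two parametrizations are connected by a finite sequence of such moves. Functoriality of the cylinder map (gluing two cylinders yields a cylinder, up to the natural isomorphism \eqref{e:natis}) implies that $\Psi$ is multiplicative under composition of moves, and $X_*$ is multiplicative on the RT side by \leref{l:isomorphisms}; hence commutativity of the diagram in \firef{f:diagram} for each individual generator implies it for every composition. Independence of the choice of $X$ then follows from the Moore--Seiberg relations on the RT side combined with the well-definedness of $\Psi$ on the TV side.

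For each generator I would compute $\Psi_{\Delta',\Delta}\circ j_P$ applied to a simple element $\varphi\in Z_{RT,Z(\C)}(\Sigma,P)$ and compare it with $j_{P'}\circ X_*(\varphi)$. The preimage $i(\varphi)\in H_{TV,\C}(\Sigma,\Delta)$ is given explicitly in \firef{f:inclusion}, so both sides are labelled state sums on graphs differing only in a small disk neighborhood of the cell modification. The cylinder $\Sigma\times I$ is then equipped with a cell decomposition that agrees with $\Delta$ on one end, $\Delta'$ on the other, and is standard outside a small region; by the locality of the Turaev--Viro state sum (and the fact that the internal decomposition may be chosen freely), the entire computation reduces to evaluating TV on a small ball whose boundary graph corresponds to the generator in question. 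The results of \ocite{mine, mine2} identify this local evaluation with a string diagram calculation in $Z(\C)$. For the Z move the local cylinder implements cyclic rotation of the tensor product factors in $\Hom_{Z(\C)}(\one, Y_1\otimes\cdots\otimes Y_n)$, matching $Z_*$. For the F move the cylinder absorbs a cut circle, and the sum over colorings of that cut together with the $d_i$ normalizations in \firef{f:inclusion} implements exactly the semisimple isomorphism $\Hom(\one, A\otimes B)\cong\bigoplus_i \Hom(\one, A\otimes i)\otimes\Hom(\one, i^*\otimes B)$ underlying $F_*$. For the B move the local ribbon graph in the cylinder acquires a half-twist, which under the TV-to-$Z(\C)$ dictionary of \ocite{mine} evaluates to the braiding, i.e.\ to $B_*$.

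The main obstacle is the S move. Here $\Sigma$ is the once-punctured torus and the two cell decompositions $\Delta,\Delta'$ differ by interchanging meridian and longitude, a global modification that cannot be localized in a disk. The computation amounts to evaluating the TV state sum on $T^2\times I$ (with a fixed marked interval for the puncture) relative to the ribbon graph arising from rotating the parametrization by $90^\circ$. I would reduce this to the evaluation of a Hopf link colored by simple objects of $Z(\C)$ placed inside a solid torus: by the standard identity $\pi\circ\Psi_{\Delta',\Delta}\circ i$ becomes a trace over a loop obtained by closing one component of the link, and can be recognized as $\frac{1}{\DD^2}\sum_{Y\in\Irr Z(\C)} s_{BY}$. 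The key input is the relation between the TV evaluation of the Hopf link and the modular $S$-matrix of $Z(\C)$ established in \ocite{mine2} in the proof that $Z_{TV}(\M)=Z_{RT,Z(\C)}(\M)$ for closed three-manifolds; applied to $T^2\times I$ this matches $S_*$ from \leref{l:isomorphisms} precisely, completing the verification for the last generator and hence proving the theorem.
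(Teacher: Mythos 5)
Your proposal is correct and follows essentially the same strategy as the paper: reduce to the generators $Z, B, F, S$ via the Moore--Seiberg/Lego-Teichm\"uller theorem, then verify commutativity for each generator by comparing the cylinder map applied to the explicit inclusion $i(\varphi)$ of \firef{f:inclusion} against the corresponding RT isomorphism, with the $Z$ and $B$ moves being immediate from earlier results and the $F$ move following from the gluing isomorphism and semisimplicity. The only cosmetic difference is in the $S$-move, where the paper organizes the computation around a convenient intermediate cell decomposition and verifies equality of the two resulting vectors by pairing against a dual vector, whereas you invoke the Hopf-link/$S$-matrix identity from the closed-manifold comparison directly; these amount to the same underlying calculation.
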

\begin{proof}
To show the diagram commutes, we will verify that it does for each of the generators $Z, B, F$ and $S$. The $Z$ and $B$ moves are essentially immediate, while the $F$ and $S$ moves require some work. Throughout the proof, our convention will be that diagrams of surfaces represent the vector spaces associated to them. In particular, a parametrized surface $(\Sigma,P)$ represents $Z_{RT, Z(\C)}(\Sigma, P)$ and a cell-decomposed surface $(\Sigma, \Delta)$ represents $Z_{TV, \C}(\Sigma, \Delta)$. In the diagrams below, we have written $\Psi$ from \firef{f:diagram} as the composition of several elementary steps for the reader's edification. We have moved several of the large diagrams to \seref{s:appendix}.
\subsection*{The Z-move}
This follows directly from the natural isomorphism from \leref{l:isomorphisms}(1). 
\subsection*{The B-move}
A proof of this fact may be found in \ocite{mine} (lemma 2.1), where we provide an explicit computation.
\subsection*{The F-move}
We will show that the diagram in \firef{f:fusion} commutes . The arrow labeled $F$ is the isomorphism described in \leref{l:isomorphisms}, those labeled $i$ are inclusion maps (\firef{f:inclusion}), and $G$ is the gluing isomorphism at the level of state-spaces (Theorem 7.3, \ocite{mine}) . The other maps are all \textit{cylinder} maps \ref{e:natmap}. Notice that this can be done in fewer steps, but the cylinder maps will be more difficult to realize. \firef{f:fusion} by contrast contains cylinder maps that are all easy to compute.

To check that this diagram commutes we begin with a vector in $Z_{RT, Z(\C)}$ and proceed about the diagram in two ways. In \firef{f:fvect} we give the answer. The explicit computation at each stage left to the reader.
\subsection*{The S-move}
We will show that the diagram in \firef{f:sdiagram} commutes. We have omitted some intermediate steps on the right side of the diagram as they are much the same as those on the left. Notice that the diagrams connected by the horizontal arrow labeled $S$ are parametrized surface while the others are of cell-decomposed surfaces. We have chosen a convenient cell decomposition as the terminating point of the diagram  which is easy to work with since there are simple maps $\alpha, \beta$ to this space which can be though of as contractions along edges $u_1$ and $u_2$ respectively (\firef{f:smovemap}).
\begin{figure}[ht] 
\figscale{.8}{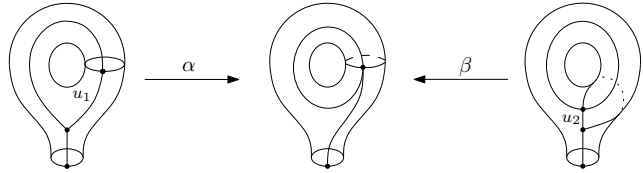}
\caption{To identify the spaces on the left and the right, we use cylinder maps $\alpha, \beta$ to the space in the center and compare the images of these maps.}\label{f:smovemap}
\end{figure}
If we start on the bottom left of figure \firef{f:sdiagram} and proceed around in two different ways, we get two vectors, $\varphi_1, \varphi_2$ in the same space  as shown in \firef{f:compare}

\begin{figure}[ht] 
$\varphi_1 = \frac{1}{\mathcal{D}}\displaystyle \sum_{x_1, x_2, N} (d_{x_1}d_{x_2}d_N)^{\frac{1}{2}} \hspace{.1cm} \figscale{.5}{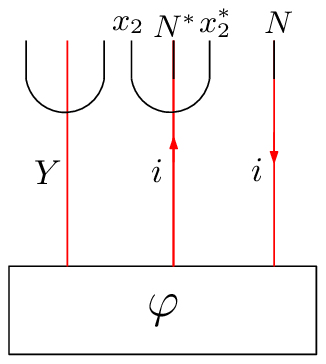} \hspace{.3cm} ;  \varphi_2 = \frac{1}{\mathcal{D}^3}\displaystyle \sum_{x_1, x_2, N} (d_{x_1}d_{x_2}d_N)^{\frac{1}{2}} \hspace{.1cm} \figscale{.5}{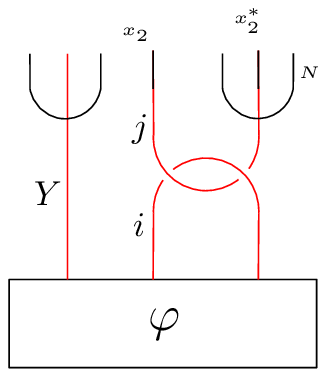}$
\caption{}\label{f:compare}
\end{figure}
We can easily verify that these vectors are the same by picking some vector $w$ in the dual space and comparing the pairings $\<\varphi_1, w\>$ and $\<\varphi_1, w\>$. 
Let $w$ be given by
\begin{center}
$\displaystyle \frac{1}{\mathcal{D}}\sum_{i,N, x_1, x_2}(d_Nd_{x_1}d_{x_2})^{\frac{1}{2}}\figscale{.8}{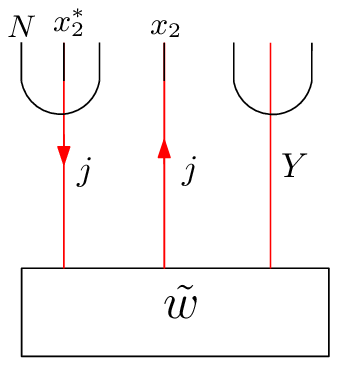}$
\end{center}
where $\tilde{w}$ is some vector in $\displaystyle \bigoplus_{j} Hom_{Z(\C)}(\one, j^* \otimes j \otimes Y)$. Then
\begin{center}
$\<w, \varphi_1\> = \displaystyle \frac{1}{\mathcal{D}^2}\sum d_N d_{x_1} d_{x_2}\figscale{.5}{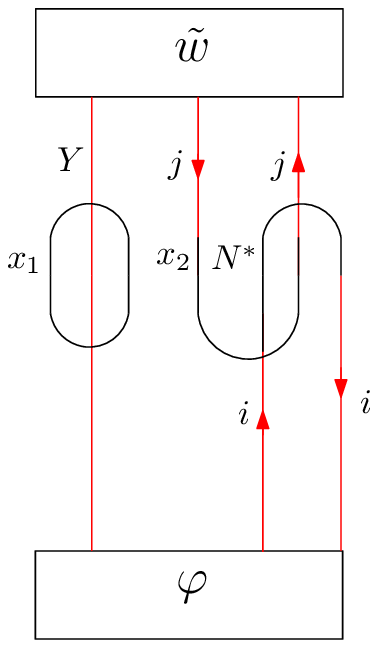} = \figscale{.5}{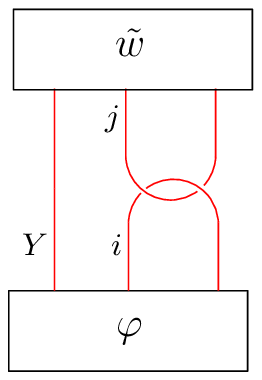}
 = \<\tilde{w}, S_{*}\varphi\>$
\end{center}
$ = \displaystyle \frac{1}{\mathcal{D}^4}\sum d_N d_{x_1} d_{x_2}\figscale{.5}{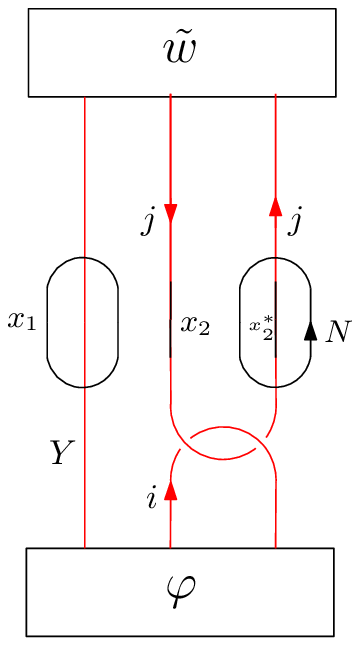} = \<w, \varphi_2\>$
\end{proof}
As an immediate consequence, we get
\begin{theorem} \label{t:main2}
For any  surface (possibly with boundary), we have a natural isomorphism 

$Z_{RT, Z(\C)}(\Sigma) \cong Z_{TV, \C}(\Sigma)$.
\end{theorem}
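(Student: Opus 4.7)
The plan is to build the isomorphism one parametrization at a time and then show it descends to the abstractly defined state spaces. First I will fix a parametrization $P = (C, M)$ of $\Sigma$ with associated cell decomposition $\Delta_P$. Cutting along $C$ presents $\Sigma$ as a disjoint union of parametrized punctured spheres $\{\Sigma_\alpha\}$, and by definition
\begin{equation*}
Z_{RT, Z(\C)}(\Sigma, P) = \bigoplus \bigotimes_\alpha Z_{RT, Z(\C)}(\Sigma_\alpha; Y_{1_\alpha}, \dots, Y_{n_\alpha}),
\end{equation*}
the sum running over colorings of the cuts. For each component, equation~(2) of the introduction (proved in \ocite{mine}) together with the inclusion of \firef{f:inclusion} supplies an isomorphism $j_{P,\alpha}$ from the punctured-sphere RT space to $Z_{TV, \C}(\Sigma_\alpha)$. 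The gluing axiom for Turaev--Viro theory (Theorem~7.3 of \ocite{mine}) then assembles these into a map $j_P : Z_{RT, Z(\C)}(\Sigma, P) \isoto Z_{TV, \C}(\Sigma, \Delta_P)$, which is an isomorphism since it is built from isomorphisms.

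Next I will show that the family $\{j_P\}_P$ is compatible with the structural isomorphisms on both sides. Given two parametrizations $P, P'$, the Bakalov--Kirillov theorem recalled in \seref{s:lego} expresses any comparison isomorphism $X_* : Z_{RT, Z(\C)}(\Sigma, P) \isoto Z_{RT, Z(\C)}(\Sigma, P')$ as a composition of the elementary moves $Z, B, F, S$ from \leref{l:isomorphisms}. On the TV side the cylinder map $\Psi = \Psi_{\Delta_P, \Delta_{P'}}$ from \eqref{e:natis} furnishes the comparison isomorphism. \thref{t:main} asserts precisely that the square
\begin{equation*}
\begin{array}{ccc}
Z_{RT, Z(\C)}(\Sigma, P) & \xrightarrow{X_*} & Z_{RT, Z(\C)}(\Sigma, P') \\
\downarrow j_P & & \downarrow j_{P'} \\
Z_{TV, \C}(\Sigma, \Delta_P) & \xrightarrow{\Psi} & Z_{TV, \C}(\Sigma, \Delta_{P'})
\end{array}
\end{equation*}
commutes on each of the four elementary moves, and since both horizontal arrows respect composition of moves, commutativity propagates to arbitrary $X_*$.

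Finally I assemble the pieces. The space $Z_{RT, Z(\C)}(\Sigma)$ is the colimit of the system $\{Z_{RT, Z(\C)}(\Sigma, P)\}_P$ under the isomorphisms of \leref{l:isomorphisms}, and $Z_{TV, \C}(\Sigma)$ admits the parallel colimit description using cylinder maps between cell-decomposed state spaces. The commutativity established above exhibits $\{j_P\}_P$ as a compatible transformation between these two systems, so it descends to a canonical isomorphism $Z_{RT, Z(\C)}(\Sigma) \isoto Z_{TV, \C}(\Sigma)$ independent of the choice of parametrization or cell decomposition. The argument adapts verbatim when $\partial \Sigma \neq \emptyset$ once the fixed boundary colorings $Z_i \in \Irr(Z(\C))$ are carried through every step. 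The substantive obstacle has already been absorbed into \thref{t:main}, whose $F$- and $S$-move cases demanded the diagrammatic computations done earlier; granting that theorem, the present deduction is essentially formal bookkeeping about colimits of compatible isomorphisms.
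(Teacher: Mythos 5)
Your argument is correct and matches the paper's intent exactly: the paper states \thref{t:main2} as an immediate consequence of \thref{t:main}, and your proposal simply spells out the bookkeeping (the maps $j_P$, compatibility with the $Z,B,F,S$ moves and cylinder maps, and passage to the choice-independent spaces) that the paper leaves implicit. The colimit phrasing is just a more formal packaging of the paper's "intermediate vector spaces identified naturally," so there is no substantive difference.
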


\section{Equivalence of Extended Theories}
We conclude the paper by combining results in \ocite{mine}, \ocite{mine2} and this paper to prove the following theorem.
\begin{theorem} \label{t:mainone}
Let $\mathcal{C}$ be a spherical fusion category. Then $Z_{TV, \mathcal{C}} \cong Z_{RT, Z(\mathcal{C})}$ as (3-2-1) TQFTs.
\end{theorem}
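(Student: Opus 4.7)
The plan is to assemble the three ingredients from the series. Theorem \ref{t:main2} of the present paper supplies a natural vector-space isomorphism $\phi_\Sigma : Z_{TV,\C}(\Sigma) \isoto Z_{RT,Z(\C)}(\Sigma)$ for every (colored) surface $\Sigma$. The main theorem of \ocite{mine2} supplies the equality $Z_{TV,\C}(\M) = Z_{RT,Z(\C)}(\M)$ for every closed $3$-manifold $\M$. The framework of \ocite{mine} supplies the extended structure (gluing axiom, cylinders as identities, monoidality under disjoint union) on the TV side, matching the analogous RT structure. What remains is to show that the collection $\{\phi_\Sigma\}$ intertwines the $3$-manifold assignments of the two theories and respects composition of cobordisms, i.e., defines a natural isomorphism of extended $3$-$2$-$1$ functors.

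First I would reduce the cobordism statement to an element statement: for a cobordism $M: \Sigma_0 \to \Sigma_1$, the induced maps $Z_{TV}(M)$ and $Z_{RT}(M)$ are equivalently elements of $Z(\overline{\Sigma_0} \sqcup \Sigma_1)$ via the standard bending/duality trick. So it suffices to show that for every $3$-manifold $M$ with $\partial M = \Sigma$ (possibly containing an embedded ribbon graph on the RT side),
\[
\phi_\Sigma\bigl( Z_{TV,\C}(M) \bigr) = Z_{RT,Z(\C)}(M).
\]
Next, I would invoke nondegeneracy: the vector $Z_{RT}(M) \in Z_{RT,Z(\C)}(\Sigma)$ is determined by the numbers $Z_{RT}(M \cup_\Sigma N)$ as $N$ ranges over $3$-manifolds with $\partial N = \overline{\Sigma}$, because such closures span a generating family of functionals on $Z_{RT}(\Sigma)$ (a standard RT fact). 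By the gluing axioms on both sides,
\[
\langle Z_{TV}(M), Z_{TV}(N)\rangle_{TV} = Z_{TV}(M \cup_\Sigma N), \qquad \langle Z_{RT}(M), Z_{RT}(N)\rangle_{RT} = Z_{RT}(M \cup_\Sigma N),
\]
and the closed-manifold equality of \ocite{mine2} identifies the right-hand sides. Provided $\phi_\Sigma$ intertwines the two pairings, we can conclude that $\phi_\Sigma(Z_{TV}(M)) = Z_{RT}(M)$.

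The intertwining of pairings is the third step. The TV pairing on $Z_{TV,\C}(\Sigma)$ is computed by the cylinder state sum $\Sigma \times I$, while the RT pairing on $Z_{RT,Z(\C)}(\Sigma)$ is the duality contraction $\Hom(\one, \otimes_i Y_i) \otimes \Hom(\one, \otimes_i Y_i^*) \to \CC$ on each summand of a pants decomposition. I would verify compatibility by choosing a cell decomposition of $\Sigma \times I$ obtained by stacking the parametrized pants decomposition, so that the pairing reduces to an iterated $F$-move followed by the elementary pairing on a capped sphere. Compatibility then follows directly from \thref{t:main} applied move-by-move, with no new computation. Finally, I would address the remaining structural axioms: cylinders give identities and disjoint unions give tensor products on both sides by inspection from \ocite{mine}, and composition of cobordisms reduces to the gluing axiom already handled. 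Along the way I would record the TV state sum for a $3$-manifold with an embedded ribbon graph, by refining the cell decomposition so the graph sits on the $1$-skeleton and weighting edges/vertices by the graph's labels, which is the natural analogue of the RT evaluation.

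The main obstacle will be the nondegeneracy/spanning input in the second step: one needs enough $3$-manifolds with boundary $\overline{\Sigma}$ to separate points of $Z_{RT}(\Sigma)$, and these must live in the class of manifolds on which extended TV is defined. This is precisely the reason the introduction warns of "mild restrictions on the allowed types of manifolds with corners." Once the admissible class is set up so that a generating family of handlebodies (with ribbon graph insertions to produce arbitrary basis vectors of $\bigoplus \Hom(\one, \otimes Y_i)$) lies inside it, the remainder of the argument is bookkeeping that assembles the main theorems of \ocite{mine}, \ocite{mine2}, and \thref{t:main2} into the claimed equivalence of extended TQFTs.
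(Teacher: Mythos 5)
Your overall reduction (surfaces via \thref{t:main2}, closed manifolds via \ocite{mine2}, then an element statement for cobordisms) is the right frame, but the core of your argument --- determining $Z_{RT}(M)$ by pairing against $Z(N)$ for $3$-manifolds $N$ with $\partial N = \overline{\Sigma}$ --- is exactly the first attempt that the paper makes and then explicitly rejects. The obstruction is not a "mild restriction" to be absorbed into the admissible class: for a surface such as the $3$-punctured sphere with boundary components colored by $Z_1, Z_2, Z_3$, the space $Z_{TV,\mathcal{C}}(\Sigma)$ can be large while there are \emph{no} extended $3$-manifolds with that boundary at all (a tangle filling requires the colored boundary points to pair up, so $\Sigma$ bounds only when the number of punctures is even, and even then colored tangles realize only the special vectors built from duality and braiding, not an arbitrary element of $\bigoplus\Hom_{Z(\mathcal{C})}(\one, Y_1\otimes\cdots\otimes Y_n)$). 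So the spanning family you need simply does not exist inside standard extended TV theory, and redefining $Z_{TV}$ as the span of boundary values of fillings gives the wrong (often zero) vector space.

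You gesture at the fix --- "ribbon graph insertions" --- but realizing arbitrary vectors forces you to allow coupons, and $Z_{TV}$ of a manifold containing a coupon is \emph{not} something you can extract by putting the graph on the $1$-skeleton and weighting cells (the labels live in $Z(\mathcal{C})$, not $\mathcal{C}$, and are handled in this series by removing tubular neighborhoods and imposing boundary conditions). The paper must therefore \emph{define} $Z_{TV,\mathcal{C}}$ on a coupon-ball to be the labeling morphism $\psi$ itself (\deref{d:coupontv}), check that this is consistent with the gluing axiom and the graphical calculus, prove the handlebody case (\leref{l:tqfthandle}), and reduce coupons with uncolored strands to simple ones by showing a certain cobordism $\mathcal{H}$ acts as the identity (\leref{l:cobid}); this is the substance of \thref{t:special} and is not bookkeeping. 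Finally, the paper's assembly step is also different from yours: rather than a nondegeneracy/pairing argument over all fillings, it presents every extended $3$-manifold as surgery on a \emph{special link} in $S^3$, proves the equality there, and invokes the surgery formula of \ocite{mine2} (Lemma 4.7). Your route could likely be made to work once the coupon extension of TV is in place (at that point handlebodies with coupon graphs do span), but as written the proposal defers precisely the step that carries the mathematical content.
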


We have already shown that the two TQFTs give the same answer for a closed 3-manifold with an embedded link \ocite{mine} (Theorem 4.8), and for a surface, possibly with boundary, as shown in \thref{t:main2}. It remains to show that the theories give the same answer on any 3-manifold with corners. 

Let us very briefly review the RT construction for 3-manifolds with corners. For more details, see \ocite{BK}, \ocite{turaev}. Fix a spherical fusion category $\mathcal{C}$.

\begin{definition}
An extended 3-manifold $\mathcal{M}$ is an oriented PL 3-manifold with boundary, together with a finite collection of disjoint framed tubes $T_{i} \subset \mathcal{M}$.
\end{definition}

An extended 3-manifold as described above is equivalent to a 3-manifold with an embedded framed tangle in the obvious way. We will use both descriptions interchangeably.

Notice that a tube $T_i$ may terminate on $\partial \mathcal{M}$ in which case we call it an \textit{open} tube, or it may close on itself, forming a solid torus, in which case we call it a \textit{closed} tube.

\begin{definition}
 A coloring of an extended 3-manifold $\mathcal{M}$ is a choice of color of simple object $Y \in Z(\mathcal{C})$ for each open tube $T_i$.
\end{definition}

We wish now to generalize the famous theorem which states that any closed 3-manifold may be obtained from $S^3$ via surgery along a framed link.

\begin{definition}
A framed link with \textit{coupons} is a framed link where components are allowed to coincide at multivalent vertices, called \textit{coupons}. We often draw coupons as rectangles instead of vertices (\firef{f:coupon}).
\end{definition}

Given $L \subset \mathbb{R}^3$, an oriented, framed link with coupons, we can color $L$ as follows. As before, assign to each edge, a simple object $Y \in Z(\mathcal{C})$. To each coupon $C_i$ assign a morphism $\varphi_{i} \in W_{i} \equiv \Hom_{Z(\mathcal{C})}(\one, Z^{\epsilon_1}_{1} \otimes \dots \otimes Z^{\epsilon_n}_{n})$, where $Z_1 \dots Z_n$ are the colors of the edges incident the coupon in clockwise cylic order, and $\epsilon_i = 1$ if the strand labeled by $Z_i$ is oriented away from the coupon and $-1$ otherwise.

As shown in \ocite{turaev}, we can evaluate such a link $L \subset \mathbb{R}^3$ to get a number $Z_{RT}(L) \in \mathbb{C}$ in a way that is invariant under isotopy of $L$. Further, since $\mathcal{C}$ (and hence $Z(\mathcal{C}$)) is a spherical category, we can actually view $L$ as lying in $S^3$.

Equivalently, if we leave the coupons of $L$ uncolored, this construction gives a vector $v \in \displaystyle \bigotimes_{i} W^{*}_{i}$, where the tensor product is over all coupons in $L$ and $W_i$ is the $\Hom$-space associated to coupon $C_i$. Thus, such a link with uncolored coupons gives a vector space $V$ and a vector $v_{L} \in V$. Both can be seen to be invariant under isotopy of $L \in S^3$.

We are most interested in a particular type of oriented, framed link with coupons:
\begin{definition}
A \textit{special} link $X$ is a framed link with coupons such that some of the link components and coupons are colored by objects and morphisms, respectively, such that the following conditions hold:
\begin{itemize}
\item Any uncolored link component is either an annulus, or has both ends on the same uncolored coupon, in which case they are required to be adjacent to one another.
\item Uncolored coupons are all of the form shown in \firef{f:coupon}
\end{itemize}
\end{definition}
\begin{figure}[ht]
\figscale{.7}{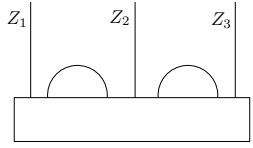}
\caption{A uncolored coupon $C$ in a special link $X$. Colored strands have a single end terminating on $C$. Uncolored strands have both ends terminating on $C$. Further, the ends are adjacent to one another.}
\label{f:coupon}
\end{figure}
\begin{theorem}
Let $\mathcal{M}$ be an extended 3-manifold as in \ocite{mine}. Then $\mathcal{C}$ may be obtained from $S^3$ via surgery along some special link $X \subset S^3$, where we define surgery along $X$ by 
\begin{equation}
M_{X} = M_{L} \backslash \displaystyle \bigcup_{i} T(C_i)
\end{equation}

Here, we do ordinary surgery along all annular link components $L$, giving $M_L$, and remove handlebodies $T(C_i)$ which are tubular neighborhoods of uncolored coupons $C_i$, as shown in \firef{f:handlebody}.
\end{theorem}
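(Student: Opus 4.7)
The plan is to reduce to the classical Lickorish--Wallace theorem for closed $3$-manifolds. First, I would cap off the boundary of $\mathcal{M}$ by gluing a handlebody $H_\alpha$ to each boundary component $\Sigma_\alpha$, in accordance with the parametrization of $\Sigma_\alpha$ that is part of the extended structure (the cuts and the graph $M_\alpha$ on $\Sigma_\alpha$ give a canonical compression body, hence a canonical handlebody filling, up to isotopy). If an open tube $T_i$ meets $\Sigma_\alpha$ at its marked point, I would extend $T_i$ into $H_\alpha$ by a short radial arc ending at the core graph of $H_\alpha$. This produces a closed oriented PL $3$-manifold $\widehat{\mathcal{M}}$ together with an embedded framed uni-trivalent graph $\Gamma \subset \widehat{\mathcal{M}}$, where the trivalent vertices lie on the cores of the added handlebodies.

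Second, I would invoke Lickorish--Wallace (in the version for links in a given $3$-manifold, see e.g. \ocite{turaev}) to represent $\widehat{\mathcal{M}}$ as surgery on an honest framed link $L_0 \subset S^3$; the annular components of $L_0$ will be the uncolored annuli of our special link $X$. After an isotopy of $\Gamma$ through the surgery, we may assume $\Gamma$ sits in $S^3 \setminus L_0$; its original (colored) edges become the colored strands and closed components of $X$, and the coupons produced by capping the boundary become the uncolored coupons.

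Third, I would argue that each added handlebody $H_\alpha$, now viewed inside $S^3$, is isotopic to the tubular neighborhood $T(C_\alpha)$ of a single uncolored coupon of the type pictured in \firef{f:coupon}: a handlebody of genus $g_\alpha$ deformation retracts onto a wedge of circles, which may be straightened into the standard coupon-with-loops picture, with each loop contributing one uncolored strand whose two ends are adjacent on the coupon. Here the parametrization of $\Sigma_\alpha$ dictates which system of meridian disks (cuts) is compressed, i.e. which arcs on the coupon become the uncolored strands; the associated graph $M_\alpha$ matches up with the colored strands emerging from the coupon, and in particular fixes the cyclic order of the incident strands. By construction, removing all the $T(C_\alpha)$ from $M_{L_0}$ gives back $\mathcal{M}$.

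Finally, I would check the two conditions in the definition of \emph{special} link. The adjacency condition on the two ends of each uncolored strand is immediate from the standard handlebody-with-loops picture used above. The condition that any uncolored link component is either an annulus or has both ends on the same uncolored coupon follows because the only uncolored components we have introduced are (i) the annular surgery components from Lickorish--Wallace and (ii) the loops used to build each $H_\alpha$. The main obstacle is the third step: verifying that the chosen handlebody filling really can be isotoped, inside $S^3$ and relative to the colored strands of $\Gamma$, into the standard coupon form without altering the surgery description; this is a moves-on-links argument (handle slides plus isotopy of the embedded graph), and it is what ties the proof to the parametrization data rather than just to the underlying topological boundary.
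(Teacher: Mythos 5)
The paper states this theorem without proof, treating it as a known generalization of the Lickorish--Wallace theorem (the surrounding text explicitly frames it that way and defers to \ocite{turaev}, \ocite{BK}), so there is no in-paper argument to compare against; your sketch supplies the standard argument and is essentially correct. Cap off each boundary component with a handlebody, extend the open tubes to its spine, apply Lickorish--Wallace to the resulting closed manifold, push the framed graph off the finitely many surgery solid tori by general position so that it lies in the link complement in $S^3$, and observe that the removed handlebodies are tubular neighborhoods of coupon-with-loops spines. Two remarks. First, the ``main obstacle'' you flag in your last paragraph largely dissolves: you do not need to isotope $H_\alpha$ into any standard position in $S^3$. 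The coupon picture of \firef{f:coupon} places no constraint on how the uncolored loops are knotted or linked in $S^3$; every genus-$g$ handlebody, however embedded, deformation retracts onto a spine consisting of a disk with $g$ loops whose endpoints can be taken adjacent in pairs, and the colored strands end on that spine by your construction. So no handle slides are required, only the choice of a spine. Second, the parametrization of $\partial\mathcal{M}$ is not actually needed for the existence statement being proved here --- any handlebody filling works; invoking it only matters if one wants the surgery presentation to be canonical up to a controlled set of moves, which the theorem does not claim. The one step worth writing out carefully is the general-position isotopy identifying $\Gamma$ with a graph in $S^3\setminus L_0$ together with its framing, but that is routine.
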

\begin{figure}[ht]
\figscale{.7}{coupon.eps} \hspace{1cm} $\longrightarrow$ \hspace{1cm} \figscale{.7}{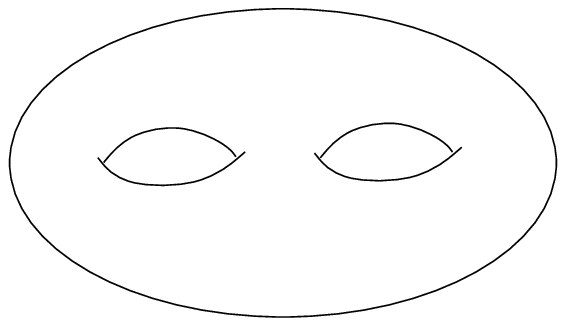}
\caption{A coupon $C$ determines a handlebody $T(C)$ by taking a tubular neighborhood of $C$ and its uncolored strands. The genus of $T(C)$ equals the number of uncolored strands incident to $C$. The colored strands determine extra data attached to this handlebody, namely marked points and tangent vectors on $\partial(T(C))$ (not pictured), and are important in definining Reshetikhin-Turaev theory. For more details, see \ocite{BK}.} 
\label{f:handlebody}
\end{figure}
Using the surgery description of an extended 3-manifold $\mathcal{M}$, we can easily define the RT invariant for such a manifold. Namely, we express $\mathcal{M}$ as the result of surgery along a \textit{special} link $X \subset S^3$, and define 
\begin{equation}
Z_{RT}(\mathcal{M}) \equiv Z_{RT}(X)
\end{equation}
where $Z_{RT}(X)$ is obtained by evaluating the special link $X$, summing over all possible colorings of unlabeled link components, and using the convention that whenever we color an unlabeled component by simply object $Y \in Z(\mathcal{C})$, we multiply by $d_Y$, its categorical dimension. As noted above, if $X$ has any uncolored coupons, then $Z_{RT}(X)$ is a vector, not a number.

We can also define Turaev-Viro theory on manifolds with embedded special links.

\begin{definition}
Let $\mathcal{M} = S_{X}^3$ be the 3-sphere with a special link $X$ inside. Then
\begin{equation}
Z_{TV, \mathcal{C}}(\mathcal{M}) \equiv Z_{TV, \mathcal{C}}(\mathcal{M'})
\end{equation}
where $\mathcal{M'}$ denotes the manifold with boundary obtained by removing tubular neighborhoods of each coupon.
\end{definition}
\begin{lemma} \label{l:tqfthandle}
Let $\mathcal{N}$ be a handlebody of genus $g$. Then $Z_{TV, \mathcal{C}}(\mathcal{N}) = Z_{RT, Z(\mathcal{C})}(\mathcal{N})$.
\end{lemma}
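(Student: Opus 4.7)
The plan is to reduce the statement to a direct evaluation of both sides against a parametrization of $\partial\mathcal{N} = \Sigma_g$ adapted to the handlebody structure. Choose $g$ pairwise disjoint simple closed curves $c_1,\dots,c_g$ on $\Sigma_g$, each bounding a compressing disk $D_i\subset\mathcal{N}$, and extend the collection $\{c_i\}$ to a full pants decomposition $C$. Let $P$ denote the resulting parametrization, and let $\Delta$ be the associated cell decomposition of $\Sigma_g$ from \seref{s:mainsection}. Extend $\Delta$ to a cell decomposition $\hat\Delta$ of $\mathcal{N}$ by using the $D_i$ as $2$-cells and a single $3$-cell (the ball obtained after cutting $\mathcal{N}$ along $D_1,\dots,D_g$). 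I will compute $Z_{TV,\C}(\mathcal{N})$ as the state sum on $\hat\Delta$ and $Z_{RT,Z(\C)}(\mathcal{N})$ via the graph-evaluation on $(\Sigma_g,P)$, and verify they agree under the isomorphism of \thref{t:main2}.

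On the RT side the vector $Z_{RT,Z(\C)}(\mathcal{N})$ has a standard closed-form description: it is the sum over simple labels $j_1,\dots,j_g\in\Irr(Z(\C))$ of the meridian cuts $c_i$, of the canonical coevaluation $\one\to Y_{j_i}\otimes Y_{j_i}^*$ weighted by $d_{j_i}$, tensored with identity morphisms on the remaining pants pieces. On the TV side, the state sum on $\hat\Delta$ localizes: outside a tubular neighborhood of $D_1\cup\dots\cup D_g$ the cell decomposition is essentially that of a $3$-ball, and the contribution of each $D_i$ to the state sum, after applying the inclusion map of \firef{f:inclusion}, yields precisely the same capping morphism with the same $d_{j_i}$ weight. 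This is the same type of local identity that already appeared in the $F$-move and $S$-move cases of \thref{t:main}, where a state sum on a disk with boundary graph was shown to project to the canonical element of the corresponding Hom-space.

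An equivalent and perhaps cleaner route is induction on $g$. The base case $g=0$ is the $3$-ball with $\partial B^3 = S^2$, and both sides evaluate to $1\in\CC\cong Z(S^2)$ by \ocite{mine}. For the inductive step, one writes $\mathcal{N}_g = \mathcal{N}_{g-1}\cup_{D^2\sqcup D^2}(D^2\times I)$ and invokes the gluing axioms for both TQFTs (established for TV in \ocite{mine2}); one then has to check that the isomorphism of \thref{t:main2} intertwines the two cobordism maps induced by attaching a $1$-handle. The main obstacle, in either route, is the careful bookkeeping of normalization factors — the $\sqrt{d_j}$ in \firef{f:inclusion}, the $\DD^{\pm 2}$ coming from state-sum normalization, and the $d_{j_i}$ arising from summation over labels of unlabeled components on the RT side. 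Matching these proceeds exactly as in the $F$-move verification of \thref{t:main}: pair both candidate vectors against an arbitrary dual vector realized as a TV state sum on a complementary handlebody, and use the closed $3$-manifold equality $Z_{TV,\C}=Z_{RT,Z(\C)}$ proved in \ocite{mine2} to conclude.
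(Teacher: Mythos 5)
Your RT-side formula is incorrect, and this is the crux of the lemma. With the cuts $c_1,\dots,c_g$ chosen to bound compressing disks $D_i\subset\mathcal{N}$, the vector $Z_{RT,Z(\C)}(\mathcal{N})$ is \emph{not} $\bigotimes_i\bigl(\sum_{j_i}d_{j_i}\,\coev_{j_i}\bigr)$; it is supported entirely on the trivial label. As the paper records (quoting \ocite{BK}, Example 4.5.3), $Z_{RT,Z(\C)}(\mathcal{N})=(\id\colon\one\to(\one\otimes\one)^{\otimes g})$, i.e.\ the only summand of $\bigoplus\Hom(\one,Z_1\otimes Z_1^*\otimes\cdots\otimes Z_g\otimes Z_g^*)$ that contributes is $Z_1=\cdots=Z_g=\one$: a cut carrying a nontrivial simple object of $Z(\C)$ cannot bound a disk in the filling. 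A quick sanity check: gluing two solid tori meridian-to-meridian gives $S^2\times S^1$, whose invariant is $1=\langle v_\one,v_\one\rangle$, whereas your candidate vector would give $\sum_j d_j^2=\DD^2$. The vector $\frac{1}{\DD}\sum_j d_j\,\coev_j$ you describe is (up to normalization) $S\cdot v_\one$, i.e.\ the handlebody vector expressed in the parametrization whose cut is a \emph{longitude} (a curve that does not bound in $\mathcal{N}$) --- you have placed yourself on the wrong side of the S-move. The TV-side step is then engineered to match this wrong target (``the same capping morphism with the same $d_{j_i}$ weight,'' with $j_i\in\Irr(Z(\C))$), so it cannot be right either; the dimension-weighted sums that genuinely appear in the state sum run over $\Irr(\C)$, not $\Irr(Z(\C))$, and they assemble into the single vacuum vector of $Z(\C)$ rather than into a $d$-weighted sum of simples of $Z(\C)$.

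The overall strategy --- compute both sides in a parametrization adapted to the handlebody and compare under the isomorphism of \thref{t:main2}, or induct on genus via the gluing axiom --- is reasonable and close to what the paper does (it cites \ocite{BK} for the RT value and \ocite{mine2}, Lemma 3.2, for the genus-one state-sum computation, leaving higher genus as an exercise). But as written the proposal proves the wrong identity, and the final ``pair against an arbitrary dual vector'' step cannot repair it: pairing your two candidate vectors against complementary handlebody fillings and invoking the closed-manifold equality would immediately expose the $\DD^{2g}$ discrepancy computed above.
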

This equality is to be interpreted as follows: Under the canonical isomorphism $Z_{TV,\mathcal{C})}(\Sigma) \cong Z_{RT, Z(\mathcal{C})}(\Sigma)$, where $\Sigma = \partial \mathcal{N}$, the two sides of the equation are identified.
\begin{proof}
As computed in \ocite{BK} (Example 4.5.3), 
\begin{equation}
Z_{RT, Z(\mathcal{C})}(\mathcal{N}) = (id: \one \to (\one \otimes \one)^{\otimes g}) \in \displaystyle \bigoplus \Hom(1, Z_1 \otimes Z_{1}^* \otimes \dots \otimes Z_{g} \otimes Z_{g}^*).
\end{equation}. 
One shows that this equals $Z_{TV,\mathcal{C}}(\mathcal{N})$ by explicit state-sum computation. For $g =1$, this can be deduced from \ocite{mine2}, (Lemma 3.2). The general case is left to the reader as an exercise.
\end{proof}

Our goal is to show that $Z_{TV, \mathcal{C}}(\mathcal{M}) \cong Z_{RT,Z(\mathcal{C})}(\mathcal{M})$ for any extended 3-manifold $\mathcal{M}$. The idea is to convert our extended 3-manifold to a closed 3-manifold $\mathcal{N}$ (possibly with a link inside) by gluing handlebodies of appropriate genus to each component of $\partial \mathcal{M}$.

For simplicity, assume $\partial \mathcal{M}$ has a single component of genus $g$. Given a vector $\psi \in Z_{TV, \mathcal{C}}(\partial{\overline{M}})$, we can try to find a handlebody $\mathcal{H}_{g}$ with an embedded colored tangle, such that $Z_{TV,\mathcal{C}}(\mathcal{H}_{g})= \psi$. By the gluing axiom, we get
\begin{equation*}
\<Z_{TV,\mathcal{C}}(\mathcal{M}),\psi \> = \<Z_{RT,Z(\mathcal{C})}(\mathcal{M}),\psi \> 
\end{equation*} 

If we can do this for any $\psi$, then we are done. 

Unfortunately, it is almost never possible to produce such a handlebody, even when $g=0$. For example, if $\Sigma$ is the 3-punctured sphere with boundary components labeled by $Z_1, Z_2, Z_3$, the space $Z_{TV, \mathcal{C}}(\Sigma)$ may be quite large,  but there are no extended 3-manifolds with boundary $\Sigma$. Indeed $\Sigma$ is cobordant to $\emptyset$ if and only if it has an even number of punctures.

One approach is to redefine $Z_{TV,\mathcal{C}}(\Sigma)$ as the vector space generated by $\{Z_{TV,\mathcal{C}}(\mathcal{M})\}$, where $\mathcal{M}$ ranges over all extended 3-manifolds with $\partial \mathcal{M} = \Sigma$. The theorem then follows from the above argument and some minor details, which we omit. This approach certainly works, but it is undesirable, e.g. it assigns a zero-dimensional vector space to any surface with an odd number of boundary circles.

A better approach is to use coupons.
\begin{definition} \label{d:coupontv}
Let $\mathcal{K}$ be the 3-ball with an special link inside as shown in \firef{f:couponball}. We define 
\begin{equation}
Z_{TV, \mathcal{C}}(\mathcal{K}) = \psi \in Hom_{Z(\mathcal{C})}(\one, Z_{1} \otimes Z_{2} \otimes Z_{3} \otimes \dots \otimes Z_{N})
\end{equation}
\end{definition}
\begin{figure}[ht]
\figscale{.5}{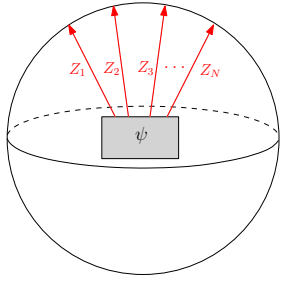}
\caption{The manifold $\mathcal{K}$ is the 3-ball $B^3$ with a special link consisting of a single coupon and $N$ strands connecting the coupon to $\partial K$. The coupon is labeled  by some morphism $\psi \in \Hom_{Z(\mathcal{C})}(\one, Z_1 \otimes \dots \otimes Z_N)$.}
\label{f:couponball}
\end{figure}

Notice that if we removed from $\mathcal{K}$ a tubular neighborhood of the coupon, we would be left with a cylinder over the $N$ punctured sphere. We think of the coupon as a handlebody $\mathcal{H}$ which satisfies $Z_{TV, \mathcal{C}}(\mathcal{H}) = \psi$.

Also note that we have defined the value of $Z_{TV,\mathcal{C}}(\mathcal{K})$. It is not a result we can deduce from standard Turaev-Viro theory. However, it is consistent with the rest of our theory. In particular, we can treat $\mathcal{K}$ as an ordinary extended 3-manifold and use the gluing axiom, the graphical calculus descibed in \ocite{mine}. In particular, if we view the braiding isomorphism, the cup and the cap (\ocite{mine2}, Section 2) as special examples of coupons, we get the same result as in \deref{d:coupontv}.
\begin{theorem} \label{t:special}
Let $\mathcal{M} =S_{X}^3$ be the 3-sphere with a special link $X$ inside. Then
\begin{equation}
Z_{TV,\mathcal{C}}(\mathcal{M}) = Z_{RT, Z(\mathcal{C})}(\mathcal{M})
\end{equation}
\end{theorem}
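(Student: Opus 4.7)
The plan is to reduce the theorem to the closed 3-manifold case established in \ocite{mine2} (Theorem 4.8) by a pairing argument, using the coupon balls of \deref{d:coupontv} as the test vectors. Both $Z_{TV,\C}(\M)$ and $Z_{RT,Z(\C)}(\M)$ naturally live in the tensor product $\bigotimes_i W_i$ of Hom-spaces attached to the uncolored coupons $C_i$ of $X$; via \leref{l:tqfthandle} and \thref{t:main2}, each factor $W_i$ is canonically identified with the TV (equivalently RT) state space of the boundary of a tubular neighborhood $T(C_i)$, so the two vectors genuinely sit in the same space.

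First, I would pick an arbitrary dual vector $\psi=\bigotimes_i \psi_i$ and realize each $\psi_i$ as $Z_{TV,\C}(\mathcal{K}_{\psi_i})$, where $\mathcal{K}_{\psi_i}$ is the coupon ball from \deref{d:coupontv}. Gluing each $\mathcal{K}_{\psi_i}$ into the socket left by removing $T(C_i)$ from $\M$ produces a new 3-sphere $\mathcal{N} = S^3_{X'}$ in which every coupon is now fully colored; that is, $\mathcal{N}$ is a closed 3-manifold equipped with an embedded colored ribbon graph.

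Next, I would invoke the gluing axioms on both sides. On the TV side, Theorem 7.3 of \ocite{mine} together with \deref{d:coupontv} gives
\begin{equation*}
\langle Z_{TV,\C}(\M),\,\psi\rangle \;=\; Z_{TV,\C}(\mathcal{N}).
\end{equation*}
On the RT side, the standard gluing of handlebody neighborhoods of coupons (see \ocite{BK}) yields the analogous identity
\begin{equation*}
\langle Z_{RT,Z(\C)}(\M),\,\psi\rangle \;=\; Z_{RT,Z(\C)}(\mathcal{N}).
\end{equation*}
Because $\mathcal{N}$ is closed with a fully colored ribbon graph, Theorem 4.8 of \ocite{mine2} identifies the right-hand sides. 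Since $\psi$ was arbitrary and the pairings are taken with respect to the canonical inner product coming from \thref{t:main2}, the two vectors are equal.

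The main obstacle, and the only nontrivial technical point, is checking that the identifications of state spaces used in the two gluings really are the same. In other words, one must verify that the canonical isomorphism $Z_{TV,\C}(\partial T(C_i)) \cong Z_{RT,Z(\C)}(\partial T(C_i))$ from \thref{t:main2} sends $Z_{TV,\C}(\mathcal{K}_{\psi_i})$ to $Z_{RT,Z(\C)}(\mathcal{K}_{\psi_i})$; this is essentially \leref{l:tqfthandle} applied to the genus-zero handlebody $\mathcal{K}_{\psi_i}$ with the coupon interpreted as an element of $\Hom_{Z(\C)}(\one,Z_1\otimes\cdots\otimes Z_N)$, but it requires carefully comparing the normalization conventions used in \firef{f:inclusion} with those implicit in \deref{d:coupontv}. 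Once this compatibility is recorded, the pairing argument closes the proof.
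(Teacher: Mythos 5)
Your pairing argument is essentially the paper's argument for the case in which every uncolored coupon of $X$ meets only colored strands, each touching it exactly once: there $T(C_i)$ is a $3$-ball, the coupon ball $\mathcal{K}_{\psi_i}$ of \deref{d:coupontv} fits into the socket, and the gluing axiom reduces everything to the closed case (note that the closed-manifold result you invoke is stated for colored links \emph{without} coupons, so even here you still need \deref{d:coupontv} plus gluing to dispose of the now fully colored coupons — a harmless but necessary extra step).

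The genuine gap is in the remaining case, which is the one this theorem actually has to add. If a coupon $C_i$ has uncolored strands with both ends on it, then by construction $T(C_i)$ is a handlebody whose genus equals the number of such strands, so the socket $S^3_X\setminus T(C_i)$ has a boundary component of genus $g>0$. The coupon ball of \deref{d:coupontv} is a $3$-ball with boundary $S^2$ and cannot be glued there, so your test vectors $\mathcal{K}_{\psi_i}$ do not exist for these coupons. Realizing an arbitrary dual vector on a genus-$g$ surface by a handlebody with an embedded colored tangle is exactly the obstruction the paper points out (it already fails for $g=0$ without coupons), and the paper explicitly declines to define a higher-genus analogue of \deref{d:coupontv}. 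Instead it proves \leref{l:cobid}: the cobordism $\mathcal{H}$ of \firef{f:cg}, glued along $\partial T(C_i)$, acts as the identity and replaces each uncolored strand by a pair of colored ones, so that after finitely many such moves every coupon has genus-zero neighborhood and your pairing argument (equivalently, the paper's) applies. To close your proof you would either need to supply and compute such a higher-genus coupon handlebody, or insert this reduction step.
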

\begin{proof}
A special case of this theorem is proved in \ocite{mine2} (Theorem 2.3), where the result is proved if $X$ is a colored link (with no coupons). If every coupon of $X$ has the same form as that in \firef{f:couponball} (every strand that is incident to a coupon is colored and touches the coupon exactly once), the theorem follows immediately from Theorem 2.3 in \ocite{mine2}, \deref{d:coupontv} and the gluing axiom.

The situation is slightly more complicated if there are coupons of $X$ that have uncolored strands (see \firef{f:coupon}). We could try to come up with an analogous defintion to \deref{d:coupontv} for the more complicated coupons, but in this case, the result follows from the state sum formula and \deref{d:coupontv}.

Let $\mathcal{H}$ be the extended 3-manifold shown in \firef{f:cg}. $\mathcal{H}$ is a cobordism, so by standard theory, it gives a linear map
\begin{equation} \label{e:ident}
Z_{TV,\mathcal{C}}(\mathcal{H}): Z_{TV, \mathcal{C}}(S^2, Z_1, Z^{*}_1, Z_2, Z^{*}_2)  \longrightarrow  Z_{TV, \mathcal{C}}(\Sigma_{2})
\end{equation}

Notice that the left hand side of \eqref{e:ident} is naturally a subspace of $Z_{TV,\mathcal{C}}(\Sigma_{2})$.
\begin{lemma} \label{l:cobid}
The map defined above is given by
\begin{equation}
Z_{TV, \mathcal{C}}(\mathcal{H}) = Id:  Z_{TV, \mathcal{C}}(S^2, Z_1, Z^{*}_1, Z_2, Z^{*}_2) \longrightarrow Z_{TV,\mathcal{C}}(\Sigma_2)
\end{equation}
where by Id, we mean the identification of the domain with its image under the identity map. If we sum up over all possible colorings of the strands, the two sides of the equation are naturally isomorphic, and we get the identity map.
\end{lemma}
\begin{proof}
We decompose $\mathcal{H}$ as shown in \firef{f:coupondec}.

\begin{figure}[ht]
\figscale{1}{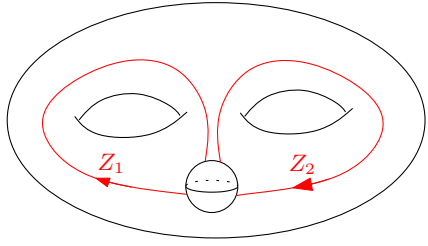}
\caption{The extended manifold $\mathcal{H}$ is obtained by taking a handlebody of genus 2, removing a 3-ball and embedding a colored tangle inside as shown. Thus, $\mathcal{H}$ is a cobordism between the sphere with 4 holes and $\Sigma_2$, a suface of genus 2.}
\label{f:cg}
\end{figure}

The result follows immediately from (\ocite{mine}, Example 9.2), where the computation is done in detail.
\end{proof}

An analogous result holds for a handlebody of any genus.

\begin{figure}[ht]
\figscale{1}{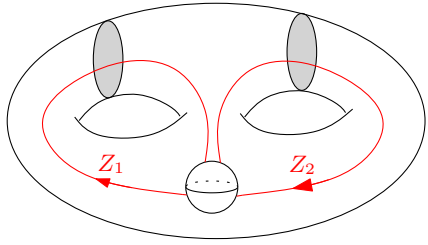}
\caption{The decomposition of $\mathcal{H}$ is given by cutting along the grey disks. The complement of these disks is a cylinder over the sphere with 4 holes. We choose the same cell decomposition (not pictures) for this cylinder as in \ocite{mine}, Example 9.2.}
\label{f:coupondec}
\end{figure}

Now we can use \leref{l:cobid} to finish proving \thref{t:special}. Suppose $X$ contains a coupon $C$ with uncolored strands beginning and ending on $C$ (see \firef{f:coupon} for an example.) Let T(C) be a tubular neighborhood of $C$, as described earlier. It is a handlebody of some genus $g$. By definition, 

\begin{equation*}
Z_{TV, \mathcal{C}}(S^3_{X}) = Z_{TV, \mathcal{C}}(S^3_{X} \backslash T(C)).
\end{equation*}

Using \leref{l:cobid}, we can glue an extended manifold $\mathcal{H}$  to $S^3_{X} \backslash T(C)$. (Here, we sum up over all colorings of strands inside $\mathcal{H}$.) Up to natural isomorphism $Z_{TV,\mathcal{C}}(\mathcal{H})$ is the identity map, so gluing it to $S^3_{X} \backslash T(C)$ does not change the value of $Z_{TV,\mathcal{C}}(S^3_{X} \backslash T(C))$.

 Our new manifold may be described as $S^{3}_{X'}$, where $X'$ is the same as $X$ except the coupon $C$ is replaced by a coupon $C$ with no uncolored strands incident to it (so $T(C')$ has genus zero). Repeating this, we reduce $X$ to a special link all of whose coupons have no uncolored strands incident to them. But we already know the theorem to be true in this case!
\end{proof}

We know from before that any extended 3-manifold may be obtained from $S^3_{X}$ by doing surgery along the annular components of $X$, and removing tubular neighborhoods of coupons of $X$. Combining \thref{t:special} with the surgery formula from \ocite{mine2} (Lemma 4.7) gives a proof of \thref{t:mainone}.

\section{Diagrams}\label{s:appendix}
\begin{landscape}
\begin{figure}[ht] 
\figscale{.8}{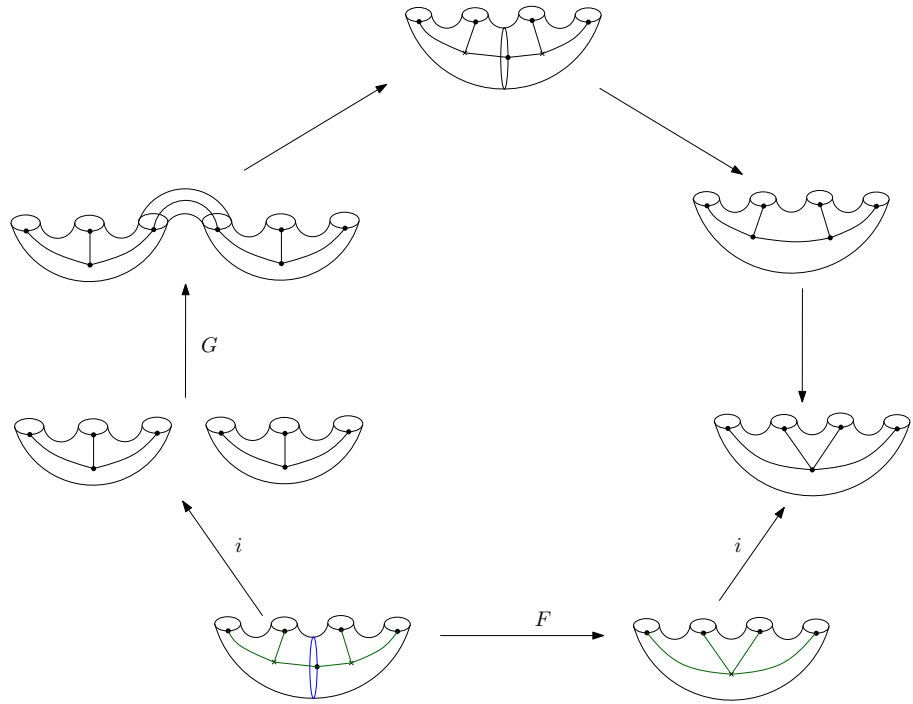}
\caption{The F-move fuses together two spheres along a boundary component. In terms of parametrized surfaces this is realized by simply removing a cut separating the spheres. At the level of cell decompositions we want to identify the result with the standard sphere decomposition (\firef{f:paramsphere}). We include several intermediate steps to make the computation more transparent.} \label{f:fusion}
\end{figure}
\end{landscape}
\begin{landscape}
\begin{figure} 
\figscale{1}{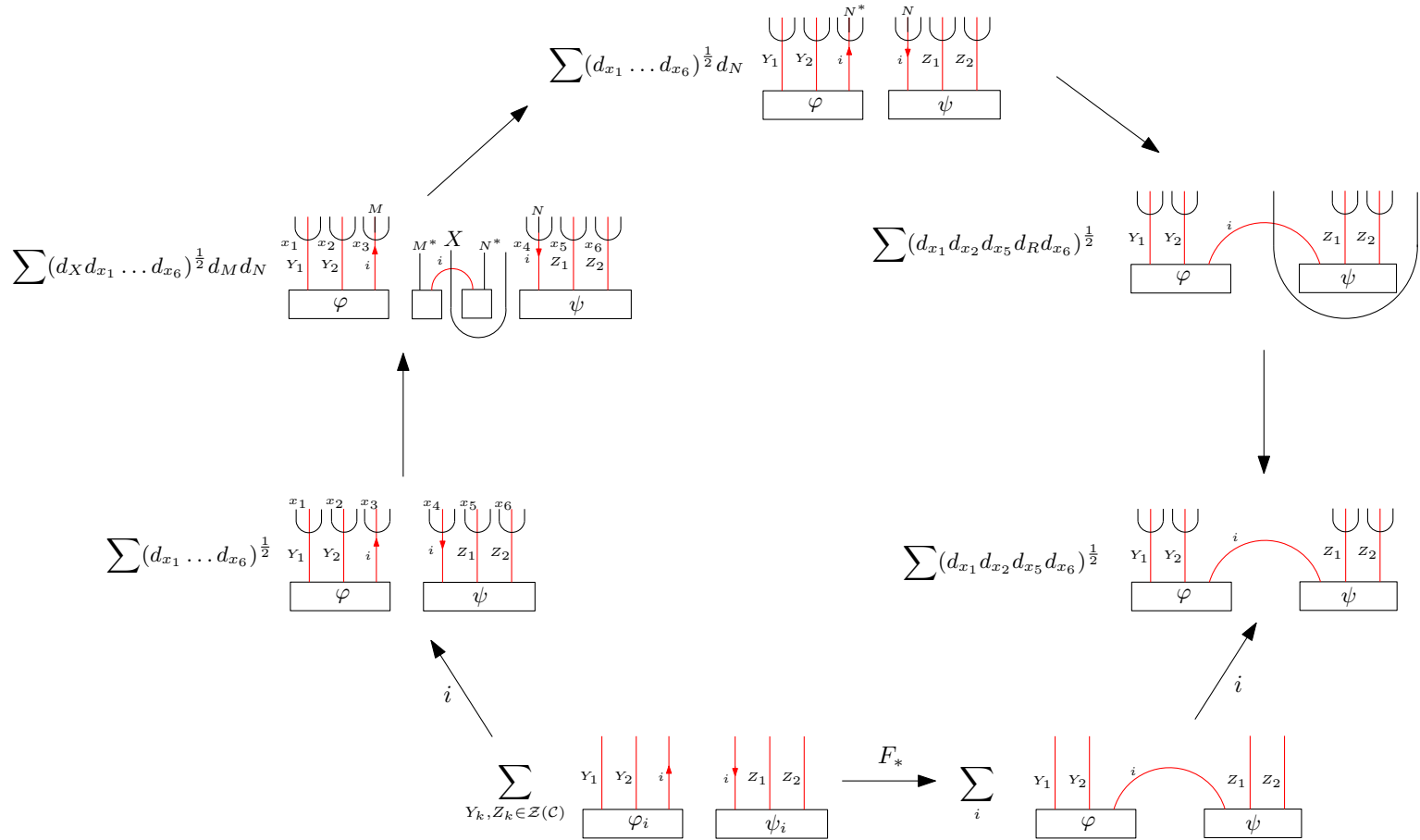}
\caption{A verification that \firef{f:fusion} commutes. All unlabeled arrow represent cylinder maps. We start on the lower left and proceed in two ways around the diagram.} \label{f:fvect}
\end{figure}
\end{landscape}
\begin{landscape}
\begin{figure}[ht] 
\fig{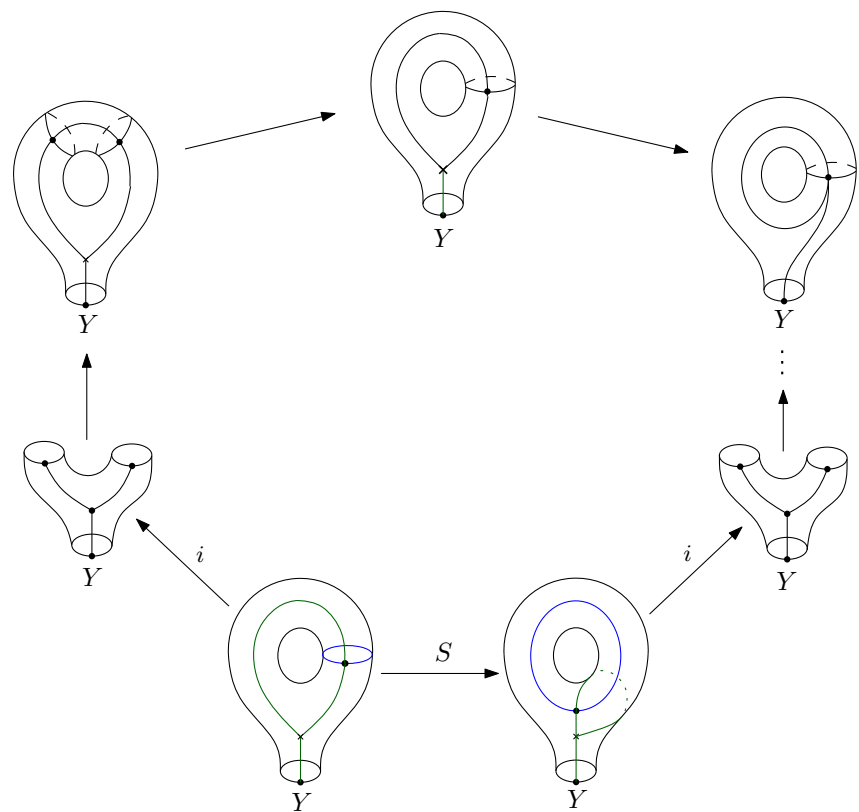}
\caption{The S-move interchanges meridians and longitudes of the 1-punctured torus. Thus, the cut (blue) and the parametrizing graph (green) exchange places under the application of S.} \label{f:sdiagram}
\end{figure}
\end{landscape}
\begin{landscape}
\begin{figure}[ht] 
\figscale{1.1}{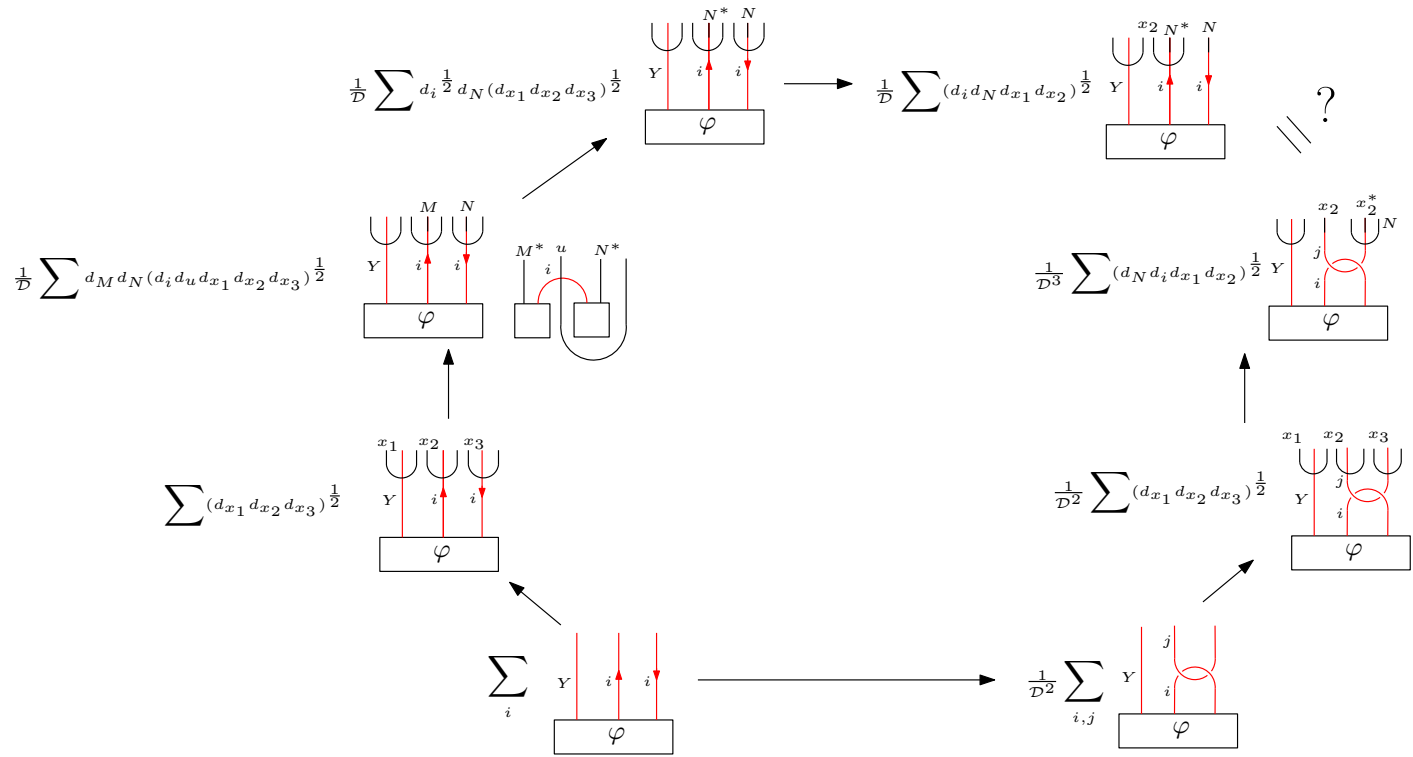}
\caption{A demonstration that \firef{f:sdiagram} commutes. Notice that when we proceed around the diagram we get two different pictures (separated by $"\stackrel{?}{=}"$), but can verify that they represent the same vector.} \label{f:svect}
\end{figure}
\end{landscape}
\begin{bibdiv}
\begin{biblist}

\bib{mine}{article}{ 
  author={Balsam, Benjamin},
  author={Kirillov, Alexander Jr.},
  title={Turaev-Viro Invariants as an Extended TQFT},
  eprint={arXiv:1004.1533},
}

\bib{mine2}{article}{
 author={Balsam, Benjamin},
 title = {Turaev-Viro Invariants as an Extended TQFT II},
 eprint={arxiv:1004.1533},
 }
\bib{BK}{book}{
   author={Bakalov, Bojko},
   author={Kirillov, Alexander, Jr.},
   title={Lectures on tensor categories and modular functors},
   series={University Lecture Series},
   volume={21},
   publisher={American Mathematical Society},
   place={Providence, RI},
   date={2001},
   pages={x+221},
   isbn={0-8218-2686-7},
   review={\MR{1797619 (2002d:18003)}},
}
\bib{BK2}{article}{
  author={Bakalov, Bojko},
  author={Kirillov, Alexander, Jr.},
  title = {On the Lego-Teichm\"uller game}
  journal={Transform. Groups}
  volume={5}
  date={2000}
  number={3}
  pages={207--244}
}  
\bib{barrett}{article}{
   author={Barrett, John W.},
   author={Westbury, Bruce W.},
   title={Invariants of piecewise-linear $3$-manifolds},
   journal={Trans. Amer. Math. Soc.},
   volume={348},
   date={1996},
   number={10},
   pages={3997--4022},
   issn={0002-9947},
   review={\MR{1357878 (97f:57017)}},
   doi={10.1090/S0002-9947-96-01660-1},
}



\bib{ENO}{article}{
   author={Etingof, Pavel},
   author={Nikshych, Dmitri},
   author={Ostrik, Viktor},
   title={On fusion categories},
   journal={Ann. of Math. (2)},
   volume={162},
   date={2005},
   number={2},
   pages={581--642},
   issn={0003-486X},
   review={\MR{2183279 (2006m:16051)}},
   doi={10.4007/annals.2005.162.581},
}



\bib{HT}{article}{
   author={Hatcher, Allen},
   author={Thurston, William}
   title={A presentation for the mapping class group of a closed orientable surface},
   journal={Topology},
   volume={19},
   date={1980},
   pages={221-237},
}

\bib{lurie}{article}{ 
  author={Lurie, Jacob},
  title={On the classification of topological quantum field theories},
  eprint={http://www-math.mit.edu/~lurie/},
}

\bib{muger1}{article}{
   author={M{\"u}ger, Michael},
   title={From subfactors to categories and topology. I. Frobenius algebras
   in and Morita equivalence of tensor categories},
   journal={J. Pure Appl. Algebra},
   volume={180},
   date={2003},
   number={1-2},
   pages={81--157},
   issn={0022-4049},
   review={\MR{1966524 (2004f:18013)}},
   doi={10.1016/S0022-4049(02)00247-5},
}

\bib{muger2}{article}{
   author={M{\"u}ger, Michael},
   title={From subfactors to categories and topology. II. The quantum double
   of tensor categories and subfactors},
   journal={J. Pure Appl. Algebra},
   volume={180},
   date={2003},
   number={1-2},
   pages={159--219},
   issn={0022-4049},
   review={\MR{1966525 (2004f:18014)}},
   doi={10.1016/S0022-4049(02)00248-7},
}

		
\bib{turaev}{book}{
   author={Turaev, V. G.},
   title={Quantum invariants of knots and 3-manifolds},
   series={de Gruyter Studies in Mathematics},
   volume={18},
   publisher={Walter de Gruyter \& Co.},
   place={Berlin},
   date={1994},
   pages={x+588},
   isbn={3-11-013704-6},
   review={\MR{1292673 (95k:57014)}},
}
\bib{similar}{article}{
   author={Turaev, V. G.},
   author={Virelizier, Alexis},
   title= {On two approaches to 3-dimensional TQFTs}
  eprint={arXiv:1006.3501}
}

\bib{TV}{article}{
   author={Turaev, V. G.},
   author={Viro, O. Ya.},
   title={State sum invariants of $3$-manifolds and quantum $6j$-symbols},
   journal={Topology},
   volume={31},
   date={1992},
   number={4},
   pages={865--902},
   issn={0040-9383},
   review={\MR{1191386 (94d:57044)}},
   doi={10.1016/0040-9383(92)90015-A},
}

\end{biblist}
\end{bibdiv}
\end{document}